\documentclass[11pt]{article}

\usepackage{amsmath, amssymb, amsthm}
\usepackage[utf8]{inputenc}
\usepackage[T1]{fontenc}
\usepackage{geometry}
\usepackage{hyperref}
\usepackage{mathtools}

\newtheorem{theorem}{Theorem}[section]
\newtheorem{lemma}[theorem]{Lemma}
\newtheorem{proposition}[theorem]{Proposition}
\newtheorem{corollary}[theorem]{Corollary}
\theoremstyle{definition}
\newtheorem{definition}[theorem]{Definition}
\theoremstyle{remark}
\newtheorem{remark}[theorem]{Remark}

\newcommand{\C}{\mathbb{C}}
\newcommand{\R}{\mathbb{R}}
\newcommand{\D}{\mathbb{D}}
\newcommand{\Acal}{\mathcal{A}}
\newcommand{\Bcal}{\mathcal{B}}
\newcommand{\Mcal}{\mathcal{M}}
\newcommand{\Fcal}{\mathcal{F}}
\newcommand{\fa}{\mathfrak{a}}
\newcommand{\fb}{\mathfrak{b}}
\newcommand{\ff}{\mathfrak{f}}
\newcommand{\Wcal}{\mathcal{W}}

\title{The Framed Beltrami-Vekua Normal Form \\ and its Pseudo-Analytic Mass}
\author{Daniel Alay\'on-Solarz\thanks{\texttt{danieldaniel@gmail.com}}}
\date{June 2026}

\begin{document}

\maketitle

\begin{abstract}
We normalize a first-order real planar elliptic system, by pointwise algebra, to a \emph{framed Beltrami--Vekua equation} $\Phi(w_{\bar z} - \mu w_z) + \Psi(\overline{w_z} - \mu\,\overline{w_{\bar z}}) + \fa w + \fb \bar w = \ff$, with $|\mu| < 1$ and $|\Phi| > |\Psi|$, and compute the closed transformation laws of its data under the recombination of unknowns $w \mapsto \varphi w + \psi \bar w$ and under orientation-preserving $C^1$ changes of variables. The 2-form
\[
\Theta = \frac{\bigl|\,\Phi\,\fb - \Psi\,\fa - (\Phi\, L\Psi - \Psi\, L\Phi)\,\bigr|^2}{\bigl(|\Phi|^2 - |\Psi|^2\bigr)^2\,\bigl(1 - |\mu|^2\bigr)}\; dx\, dy, \qquad L = \bar\partial - \mu\,\partial,
\]
is invariant under the recombination and covariant under the changes of variables. The total mass $\Mcal = \int_\Omega \Theta$ is therefore an invariant of the equivalence class. One recombination and one scaling carry any framed equation, in closed
form, onto the trivial-frame slice --- a Beltrami--Vekua equation over the
same $\mu$ --- there identifying $\Theta$ with the pseudo-analytic mass
density of \cite{mass}. We then show all of this persists at measurable regularity: it suffices that $\mu$ be measurable and locally elliptic and that the frame lie in $W^{1,2}_{\mathrm{loc}} \cap L^\infty_{\mathrm{loc}}$, the changes of variables then being quasiconformal homeomorphisms. In that class every equation with $\|\mu\|_\infty < 1$ is quasiconformally equivalent, of equal mass, to one over $\mu = 0$.
\end{abstract}

\section{Introduction}\label{sec:intro}

The Beltrami--Vekua equation
\[
w_{\bar z} - \mu\, w_z + \Acal\, w + \Bcal\,\bar w = \Fcal, \qquad |\mu| < 1,
\]
of \cite{mass} is the universal complex form of a smooth first-order real planar
elliptic system that keeps the principal part inside the equation ---
\emph{pre-uniformized}, in the language of \cite{mass} --- while wearing
Vekua's algebraic dress: one principal-part term and $\bar w$, not
$\overline{w_{\bar z}}$, in the conjugate slot. On the space of such equations
\cite{mass} acts by two symmetries, a multiplicative gauge $w \mapsto \phi w$
of the unknown and an orientation-preserving change of variables of the
domain, and exhibits a 2-form, the \emph{pseudo-analytic mass density}
$|\Bcal|^2\,(1 - |\mu|^2)^{-1}\,dx\,dy$, invariant under the gauge and covariant
under the change of variables; its integral over $\Omega$ --- the
\emph{pseudo-analytic mass} --- is therefore an invariant of the equivalence class.

\medskip\noindent\textbf{The full recombination of the unknown.}
The gauge $w \mapsto \phi w$ is only half of the natural symmetry. A complex
unknown over a real system carries a real two-dimensional fibre, and the
largest pointwise group acting on it is the real-linear recombination
\[
w \;=\; \varphi\, w' + \psi\,\bar w', \qquad |\varphi| > |\psi|,
\]
of which $w \mapsto \phi w'$ is the holomorphic slice $\psi = 0$. The
Beltrami--Vekua class is \emph{not} closed under the missing half: feeding
$\bar w'$ through the operator $L = \bar\partial - \mu\,\partial$ produces a
term in $\overline{M w'}$, where $M = \partial - \bar\mu\,\bar\partial$, that
the single principal slot cannot hold. Restoring it costs one coefficient.
The resulting \emph{framed Beltrami--Vekua equation}
\[
\Phi\, L w \;+\; \Psi\, \overline{M w} \;+\; \fa\, w + \fb\, \bar w \;=\; \ff,
\qquad |\Phi| > |\Psi|,
\]
carrying a \emph{frame} $(\Phi, \Psi)$ in place of the normalized leading pair
$(1, 0)$, is the smallest class stable under the full recombination --- its
\emph{terminal} class. The Beltrami--Vekua equation of \cite{mass} is the
trivial-frame slice $(\Phi, \Psi) = (1, 0)$, and the recombination acts on the
frame by composition with the pointwise real-linear map
$z \mapsto \varphi z + \psi\bar z$, namely
$(\Phi, \Psi) \mapsto (\Phi\varphi + \Psi\bar\psi,\; \Phi\psi + \Psi\bar\varphi)$.

\medskip\noindent\textbf{A conversion that takes no derivative.}
The data of the framed form are produced from the original system by pure
pointwise algebra. One bundles the two real equations into a single complex
equation $E$ with four derivative slots (Section~\ref{sec:slots}), computes the
Beltrami coefficient $\mu$ of the system pointwise from its structure
polynomial (Section~\ref{sec:skeleton}), and replaces $E$ by $E + \mu\,\bar E$;
the result is a framed Beltrami--Vekua equation, with frame, lower-order data,
and forcing all pointwise algebraic in the coefficients of the system
(Theorem~\ref{thm:conversion}). No derivative of any coefficient is taken. In
particular the form exists already for systems whose coefficients are merely
continuous --- and, in Section~\ref{sec:measurable}, merely measurable --- a
class on which the seven-step pipeline of \cite{mass}, which differentiates the
principal coefficients at its second step, does not operate.

\medskip\noindent\textbf{The main result: the pseudo-analytic mass of the
framed form.} The frame carries the single derivative of the whole theory, in
its $L$-Wronskian $W_L(\Phi, \Psi) := \Phi\, L\Psi - \Psi\, L\Phi$. Forming the
\emph{numerator field} $N := \Phi\,\fb - \Psi\,\fa - W_L(\Phi, \Psi)$ and the
2-form
\[
\Theta \;=\; \frac{|N|^2}{\bigl(|\Phi|^2 - |\Psi|^2\bigr)^2\,(1 - |\mu|^2)}\;
dx\, dy
\]
(Definition~\ref{def:mass}), the principal result
(Theorem~\ref{thm:framed-mass}) is that $\Theta$ is invariant under every
recombination of the unknown and covariant under every orientation-preserving
change of variables, so that the total mass
$\Mcal = \int_\Omega \Theta \in [0, \infty]$ is an invariant of the
equivalence class. At the trivial frame the Wronskian vanishes, $N = \fb$, and
$\Theta$ recovers the density of \cite{mass}.

\medskip\noindent\textbf{Why the Wronskian is in the formula.}
The invariance has a single mechanism. Under the recombination, the numerator
obeys the exact law $N' = (|\varphi|^2 - |\psi|^2)\,N$
(equation~\eqref{eq:N-law}), which together with the multiplicativity of the
frame determinant cancels every factor of $\Theta$. The exactness is not a
fortunate collapse among many terms. The recombination contaminates the
lower-order data $(\fa, \fb)$ with derivatives of $(\varphi, \psi)$, and the
$L$-Wronskian, recomputed on the new frame, acquires \emph{exactly the same}
contaminating expression; the two cancel identically. The Wronskian sits in
$N$ precisely so that this cancellation can happen --- it is the unique
correction that absorbs the substitution's derivative defect, and the whole
invariance is that one subtraction.

\medskip\noindent\textbf{One derivative, and where it falls.}
The derivative ledger of the theory closes at one entry. The conversion is
pointwise; the recombination touches the data through derivatives of
$(\varphi, \psi)$, all of which the frame absorbs into $W_L$; and the change of
variables touches the frame through a single multiplicative weight, which the
lower-order data avoid entirely. The only derivative anywhere in the
construction therefore falls on the frame, and the invariant survives wherever
the frame is weakly differentiable. No partial differential equation is solved
anywhere, save the measurable Riemann mapping theorem invoked by the closing
uniformization corollary.

\medskip\noindent\textbf{Measurable regularity and uniformization.}
Because nothing differentiates $\mu$ or the lower-order data, the theory
descends below the smooth class. Section~\ref{sec:measurable} re-runs the
entire construction with the frame in
$\Wcal := W^{1,2}_{\mathrm{loc}} \cap L^\infty_{\mathrm{loc}}$, the Beltrami
coefficient merely measurable and locally elliptic, and the changes of
variables quasiconformal: $\Theta$ is automatically locally integrable, and
every transformation law holds almost everywhere
(Theorem~\ref{thm:measurable-mass}). This is the construction's natural home.
The smooth theory cannot transform its own Beltrami coefficient away --- the
uniformizing map of a non-smooth $\mu$ is in general only
$W^{1,2}_{\mathrm{loc}}$ --- whereas at measurable regularity every framed
equation with $\|\mu\|_\infty < 1$ is quasiconformally equivalent, of equal
mass, to one over $\tilde\mu = 0$: a framed \emph{Vekua} equation
(Corollary~\ref{cor:uniformization}). The measurable class is the smallest
extension of the smooth one closed under its own uniformization, and it is
Bojarski's class \cite{bojarski}, where the Beltrami coefficient is merely
measurable.

\medskip\noindent\textbf{Position among the classical forms.}
The framed form realizes the two classical virtues at once. Like Vekua's
$w_{\bar z} + A w + B\bar w = F$ \cite{vekua} it carries $\bar w$ --- not
$\overline{w_{\bar z}}$ --- in the conjugate slot, and so supports the
phase-only gauge action on which $|\Bcal|^2$ becomes an intrinsic density;
like Bojarski's form \cite{bojarski} it is reached by a purely algebraic
elimination valid at measurable coefficients. It pays neither Vekua's
uniformizing PDE solve nor Bojarski's doubled principal part. The recombination
$w \mapsto \varphi w + \psi\bar w$ that the frame is built to absorb is exactly
the symmetry that the conjugate Vekua coupling admits and that Bojarski's
$\overline{w_{\bar z}}$ coupling does not.

\medskip\noindent\textbf{Relation to \cite{mass} and \cite{abs}.}
One recombination and one scaling carry any framed equation onto the
trivial-frame slice in closed form (Section~\ref{sec:straightening}): the
straightened conjugate coefficient is the numerator field $N$ normalized by the
frame determinant, and $\Theta$ is the Beltrami--Vekua density of \cite{mass}
evaluated on the straightening. That the present invariant agrees, on the
slice, with the output of the seven-step pipeline of \cite{mass} is the
Absorption Theorem of the companion paper \cite{abs}.

\medskip\noindent\textbf{Outline.}
Section~\ref{sec:skeleton} fixes the skeleton form and computes the Beltrami
coefficient $\mu$. Section~\ref{sec:slots} bundles the system into the complex
slot form. Sections~\ref{sec:framed-def}--\ref{sec:conversion} define the
framed equation and convert any elliptic system to it, pointwise. Sections
\ref{sec:substitution} and~\ref{sec:diffeo} compute the closed transformation
laws under recombination of the unknown and under orientation-preserving
changes of variables. Section~\ref{sec:mass} assembles $\Theta$ and proves the
main invariance theorem. Section~\ref{sec:straightening} reduces the framed
form to the Beltrami--Vekua slice. Section~\ref{sec:measurable} re-runs the
whole construction at measurable regularity and closes with the uniformization
corollary that the smooth category does not admit.
\section{The skeleton form}\label{sec:skeleton}

Let $\Omega \subset \R^2 \cong \C$ be a domain with coordinate $z = x + iy$. A first-order real planar elliptic system in real unknowns $(u, v)$, in \emph{skeleton form}, is
\begin{equation}\label{eq:skeleton}
\begin{cases}
-v_y + a_{11}\, u_x + a_{12}\, u_y + a_{13}\, u + a_{14}\, v = f_1, \\[2pt]
\phantom{-}v_x + a_{21}\, u_x + a_{22}\, u_y + a_{23}\, u + a_{24}\, v = f_2,
\end{cases}
\end{equation}
with $a_{ij}, f_k \in C^0(\Omega; \R)$ and the ellipticity conditions
\[
a_{11} > 0, \qquad \delta \;:=\; a_{11}a_{22} - \tfrac14\,(a_{12} + a_{21})^2 \;>\; 0
\]
pointwise. Every first-order $2\times 2$ real elliptic system reaches \eqref{eq:skeleton} by a pointwise row operation \cite[\S 2]{mass}. As in \cite{mass}, set
\begin{equation}\label{eq:structure-data}
\alpha := \frac{a_{22}}{a_{11}}, \qquad \beta := -\frac{a_{12} + a_{21}}{a_{11}}, \qquad \Delta := 4\alpha - \beta^2 = \frac{4\,\delta}{a_{11}^2} > 0,
\end{equation}
and let $\lambda \in C^0(\Omega; \C)$ be the upper-half-plane root of the \emph{structure polynomial},
\begin{equation}\label{eq:structure-poly}
\lambda^2 + \beta\,\lambda + \alpha = 0, \qquad \lambda = \frac{-\beta + i\sqrt{\Delta}}{2} \;=:\; \ell_1 + i\,\ell_2,
\end{equation}
so that $\ell_1 = -\beta/2$, $\ell_2 = \sqrt{\Delta}/2 > 0$, $\lambda + \bar\lambda = -\beta$, $|\lambda|^2 = \alpha$. The \emph{Beltrami coefficient} of the system is its Cayley image
\begin{equation}\label{eq:mu-def}
\mu \;:=\; \frac{\lambda - i}{\lambda + i} \;=\; \frac{(\alpha - 1) + i\beta}{\alpha + 1 + \sqrt{\Delta}}\,,
\qquad
1 - |\mu|^2 \;=\; \frac{2\sqrt{\Delta}}{\alpha + 1 + \sqrt{\Delta}} \;>\; 0,
\end{equation}
where the two closed forms follow from $|\lambda + i|^2 = \alpha + 1 + 2\ell_2 = \alpha + 1 + \sqrt{\Delta}$: the first by $\mu = (\lambda - i)(\bar\lambda - i)/|\lambda + i|^2 = [(|\lambda|^2 - 1) - i(\lambda + \bar\lambda)]/|\lambda+i|^2$, and the second by
\[
|\lambda + i|^4 - \bigl[(\alpha-1)^2 + \beta^2\bigr] = (\alpha+1+\sqrt\Delta)^2 - (\alpha+1)^2 + \Delta = 2\sqrt\Delta\,(\alpha + 1 + \sqrt\Delta)\cdot 1,
\]
using $(\alpha+1)^2 - (\alpha-1)^2 - \beta^2 = 4\alpha - \beta^2 = \Delta$. In particular $\mu \in C^0(\Omega; \D)$, computed pointwise.

\section{The complex slot form}\label{sec:slots}

Set $w := u + iv$ and recall $\overline{w_z} = (\bar w)_{\bar z}$, $\overline{w_{\bar z}} = (\bar w)_z$. The four real first derivatives of $(u, v)$ are real-linearly equivalent to the pair $(w_z, w_{\bar z})$ together with its conjugates, so the complex residual $E := R_1 + iR_2$ of \eqref{eq:skeleton} has a unique presentation with four derivative slots. Expanding $u = (w + \bar w)/2$, $v = -i(w - \bar w)/2$ and collecting:
\begin{equation}\label{eq:slot-form}
E \;=\; s\; w_{\bar z} \;+\; r\; w_z \;+\; r\; \overline{w_{\bar z}} \;+\; (s - 2)\; \overline{w_z} \;+\; a\, w + b\, \bar w \;-\; f,
\end{equation}
where
\begin{equation}\label{eq:rs}
s := \tfrac12\bigl[(a_{11} + a_{22} + 2) + i\,(a_{21} - a_{12})\bigr], \qquad
r := \tfrac12\bigl[(a_{11} - a_{22}) + i\,(a_{12} + a_{21})\bigr],
\end{equation}
\[
a = \tfrac12\bigl[(a_{13} + a_{24}) + i\,(a_{23} - a_{14})\bigr], \qquad
b = \tfrac12\bigl[(a_{13} - a_{24}) + i\,(a_{23} + a_{14})\bigr], \qquad
f = f_1 + i f_2 .
\]
Conjugation swaps the slots pairwise: $\bar E$ carries $\bar s$ on $\overline{w_{\bar z}}$, $\bar r$ on $\overline{w_z}$, $\bar r$ on $w_{\bar z}$, $(\bar s - 2)$ on $w_z$, and $(\bar a, \bar b, \bar f)$ on $(\bar w, w, 1)$. All coefficients are $C^0$ and pointwise algebraic in those of \eqref{eq:skeleton}.

\section{The framed Beltrami--Vekua form}\label{sec:framed-def}

\begin{definition}\label{def:framed}
Let $\mu \in C^0(\Omega; \D)$ and write
\[
L := \bar\partial - \mu\,\partial, \qquad M := \partial - \bar\mu\,\bar\partial
\]
for the Beltrami operator of $\mu$ and its conjugate, so that $\overline{Mw} = \overline{w_z} - \mu\,\overline{w_{\bar z}}$. A \emph{framed Beltrami--Vekua equation} on $\Omega$ is an equation
\begin{equation}\label{eq:framed}
\Phi\, L w \;+\; \Psi\, \overline{M w} \;+\; \fa\, w + \fb\, \bar w \;=\; \ff
\end{equation}
with $\Phi, \Psi, \fa, \fb, \ff \in C^0(\Omega; \C)$ and $|\Phi| > |\Psi|$ pointwise. The pair $(\Phi, \Psi)$ is the \emph{frame}; the Beltrami--Vekua equation of \cite{mass} is the case $(\Phi, \Psi) = (1, 0)$.
\end{definition}

For later use we record the elementary intertwining, valid for any $C^1$ function $g$ and immediate from $\partial_{\bar z}\bar g = \overline{\partial_z g}$:
\begin{equation}\label{eq:intertwine}
L\bar g = \overline{M g}, \qquad M\bar g = \overline{L g}.
\end{equation}

\section{The conversion}\label{sec:conversion}

\begin{theorem}[Conversion to framed form]\label{thm:conversion}
Let \eqref{eq:skeleton} be elliptic with $C^0$ coefficients, $E$ its slot form \eqref{eq:slot-form}, and $\mu$ its Beltrami coefficient \eqref{eq:mu-def}. Then
\[
E + \mu\,\bar E
\]
is a framed Beltrami--Vekua equation \eqref{eq:framed} over $\mu$, with
\begin{equation}\label{eq:conversion-data}
\Phi = s + \mu\,\bar r, \qquad
\Psi = \Phi - 2, \qquad
\fa = a + \mu\,\bar b, \qquad
\fb = b + \mu\,\bar a, \qquad
\ff = f + \mu\,\bar f,
\end{equation}
and frame determinant
\begin{equation}\label{eq:frame-det}
|\Phi|^2 - |\Psi|^2 \;=\; 2\,a_{11}\sqrt{\Delta} \;>\; 0 .
\end{equation}
All data are pointwise algebraic in the coefficients of the system.
\end{theorem}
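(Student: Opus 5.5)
The plan is to compute the four derivative slots of $E + \mu\bar E$ directly from the slot form \eqref{eq:slot-form} and the conjugation rule recorded after it, and to show that they collapse to $\Phi\,Lw + \Psi\,\overline{Mw}$. Since $Lw = w_{\bar z} - \mu w_z$ and $\overline{Mw} = \overline{w_z} - \mu\,\overline{w_{\bar z}}$, we must check that the coefficients of $(w_{\bar z}, w_z, \overline{w_z}, \overline{w_{\bar z}})$ are $(\Phi, -\mu\Phi, \Psi, -\mu\Psi)$. Reading off the slots gives the following.
\begin{itemize}
\item On $w_{\bar z}$ the coefficient is $s + \mu\bar r$. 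This is the definition of $\Phi$.
\item On $\overline{w_z}$ the coefficient is $(s-2) + \mu\bar r = \Phi - 2$. This is the definition of $\Psi$.
\item On $w_z$ the coefficient is $r + \mu(\bar s - 2)$. We need it to equal $-\mu(s + \mu\bar r)$.
\item On $\overline{w_{\bar z}}$ the coefficient is $r + \mu\bar s$. We need it to equal $-\mu(s - 2 + \mu\bar r)$.
\end{itemize}
Both remaining requirements reduce to the same single condition,
\[
\bar r\,\mu^2 + (s + \bar s - 2)\,\mu + r = 0, \qquad s + \bar s - 2 = a_{11} + a_{22}.
\]
The lower-order terms and the forcing are immediate from the conjugation rule: $w$ carries $a + \mu\bar b$, $\bar w$ carries $b + \mu\bar a$, and the right-hand side is $f + \mu\bar f$. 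No derivative is taken anywhere, so the data are pointwise algebraic.

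The key step, and the only real content, is to verify that the Beltrami coefficient \eqref{eq:mu-def} is a root of this quadratic. I would substitute $\mu = (\lambda - i)/(\lambda + i)$ and clear the denominator $(\lambda+i)^2$, which gives
\[
\bar r(\lambda - i)^2 + (a_{11} + a_{22})(\lambda^2 + 1) + r(\lambda + i)^2 .
\]
Collecting powers of $\lambda$ gives the coefficients $(r + \bar r + a_{11} + a_{22})$ on $\lambda^2$, $2i(r - \bar r)$ on $\lambda$, and $(a_{11} + a_{22} - r - \bar r)$ on $1$. From \eqref{eq:rs} we have $r + \bar r = a_{11} - a_{22}$ and $r - \bar r = i(a_{12} + a_{21})$. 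The expression therefore becomes
\[
2a_{11}\lambda^2 - 2(a_{12} + a_{21})\lambda + 2a_{22} = 2a_{11}\bigl(\lambda^2 + \beta\lambda + \alpha\bigr),
\]
which vanishes by the structure polynomial \eqref{eq:structure-poly}. This identifies the conversion as exactly the Cayley transform of the structure root.

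For the determinant \eqref{eq:frame-det}, I would use $\Psi = \Phi - 2$ to write
\[
|\Phi|^2 - |\Psi|^2 = 4\,\mathrm{Re}\,\Phi - 4 = 2(a_{11} + a_{22}) + 4\,\mathrm{Re}(\mu\bar r).
\]
It remains to evaluate $\mathrm{Re}(\mu\bar r)$ using the closed form $\mu = [(\alpha - 1) + i\beta]/(\alpha + 1 + \sqrt\Delta)$. Since $\bar r = \tfrac12 a_{11}(1 - \alpha + i\beta)$, we get
\[
\mathrm{Re}(\mu\bar r) = \tfrac12 a_{11}\,\frac{-(\alpha - 1)^2 - \beta^2}{\alpha + 1 + \sqrt\Delta}.
\]
Then $(\alpha - 1)^2 + \beta^2 = (\alpha + 1)^2 - \Delta = (\alpha + 1 + \sqrt\Delta)(\alpha + 1 - \sqrt\Delta)$, so $\mathrm{Re}(\mu\bar r) = -\tfrac12 a_{11}(\alpha + 1 - \sqrt\Delta)$. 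The total is
\[
2a_{11}(1 + \alpha) - 2a_{11}(\alpha + 1 - \sqrt\Delta) = 2a_{11}\sqrt\Delta > 0 .
\]
This gives $|\Phi| > |\Psi|$ and completes the framing condition of Definition~\ref{def:framed}. I expect the main obstacle to be only bookkeeping in the quadratic-root verification; the determinant simplification is the second place where care with the identity $\Delta = 4\alpha - \beta^2$ is needed.
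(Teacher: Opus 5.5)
Your proposal is correct and follows the paper's proof step for step. It uses the same slot matching, reducing both off-diagonal requirements to $r + \mu(a_{11}+a_{22}) + \mu^2\bar r = 0$, verified by clearing $(\lambda+i)^2$ against the structure polynomial, and the same evaluation of $4\operatorname{Re}\Phi - 4$ through the real product $\mu\bar r$. Your difference-of-squares factorization $(\alpha-1)^2+\beta^2 = (\alpha+1+\sqrt\Delta)(\alpha+1-\sqrt\Delta)$ closes the determinant computation a little more tidily than the paper's common-denominator manipulation.
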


\begin{proof}
Collecting slots, $E + \mu\bar E$ carries $s + \mu\bar r$ on $w_{\bar z}$, $\;r + \mu(\bar s - 2)$ on $w_z$, $\;r + \mu\bar s$ on $\overline{w_{\bar z}}$, $\;(s-2) + \mu\bar r$ on $\overline{w_z}$, and the lower-order data \eqref{eq:conversion-data}. Matching against the slots of \eqref{eq:framed}, which are $\Phi$ on $w_{\bar z}$, $-\Phi\mu$ on $w_z$, $-\Psi\mu$ on $\overline{w_{\bar z}}$, $\Psi$ on $\overline{w_z}$, the two required identities
\[
r + \mu(\bar s - 2) = -\mu\,(s + \mu\bar r), \qquad
r + \mu\bar s = -\mu\,\bigl((s - 2) + \mu\bar r\bigr)
\]
are one and the same, namely (using $s + \bar s - 2 = a_{11} + a_{22}$)
\begin{equation}\label{eq:key-identity}
r \;+\; \mu\,(a_{11} + a_{22}) \;+\; \mu^2\,\bar r \;=\; 0 .
\end{equation}
To verify \eqref{eq:key-identity}, multiply by $(\lambda + i)^2$ and use $\mu(\lambda + i) = \lambda - i$, so $\mu(\lambda+i)^2 = \lambda^2 + 1$ and $\mu^2(\lambda+i)^2 = (\lambda - i)^2$. Writing $2r = P + iQ$ with $P = a_{11} - a_{22}$ and $Q = a_{12} + a_{21}$,
\begin{align*}
2\,(\lambda+i)^2\,\bigl[\,r + \mu(a_{11}+a_{22}) + \mu^2\bar r\,\bigr]
&= (P + iQ)(\lambda^2 + 2i\lambda - 1) + (P - iQ)(\lambda^2 - 2i\lambda - 1) \\
&\qquad + 2\,(a_{11} + a_{22})(\lambda^2 + 1) \\[2pt]
&= 2P(\lambda^2 - 1) - 4Q\lambda + 2(a_{11} + a_{22})(\lambda^2 + 1) \\[2pt]
&= 4a_{11}\lambda^2 - 4Q\lambda + 4a_{22}
\;=\; 4a_{11}\bigl(\lambda^2 + \beta\lambda + \alpha\bigr) \;=\; 0,
\end{align*}
by $Q = -\beta a_{11}$, $a_{22} = \alpha a_{11}$, and the structure polynomial \eqref{eq:structure-poly}. So the slots of $E + \mu\bar E$ lie in framed position with $\Phi = s + \mu\bar r$ and $\Psi = (s - 2) + \mu\bar r = \Phi - 2$.

For \eqref{eq:frame-det}: since $\Psi = \Phi - 2$,
\[
|\Phi|^2 - |\Psi|^2 = 4\operatorname{Re}\Phi - 4, \qquad
2\operatorname{Re}\Phi = (s + \bar s) + (\mu\bar r + \bar\mu r) = (a_{11} + a_{22} + 2) + 2\operatorname{Re}(\mu\bar r).
\]
By the closed form of $\mu$ in \eqref{eq:mu-def}, with $D := \alpha + 1 + \sqrt{\Delta} = |\lambda + i|^2$,
\[
2\operatorname{Re}(\mu\bar r) = \frac{(\alpha - 1)P + \beta Q}{D}
= \frac{(\alpha-1)\,a_{11}(1 - \alpha) - \beta^2 a_{11}}{D}
= -\,\frac{a_{11}\bigl[(1-\alpha)^2 + \beta^2\bigr]}{D},
\]
so, using $(1+\alpha)^2 - (1-\alpha)^2 - \beta^2 = \Delta$ and $(1+\alpha)D = (1+\alpha)^2 + (1+\alpha)\sqrt\Delta$,
\begin{align*}
4\operatorname{Re}\Phi - 4
&= 2a_{11}(1+\alpha) - \frac{2a_{11}\bigl[(1-\alpha)^2 + \beta^2\bigr]}{D}
= \frac{2a_{11}\bigl[(1+\alpha)\sqrt\Delta + \Delta\bigr]}{D} \\
&= \frac{2a_{11}\sqrt\Delta\,\bigl[\alpha + 1 + \sqrt\Delta\bigr]}{D}
= 2a_{11}\sqrt{\Delta}.
\end{align*}
Positivity is ellipticity, and $|\Phi| > |\Psi|$ follows.
\end{proof}

\begin{remark}\label{rem:cost-zero}
The conversion uses three pointwise-algebraic operations --- the slot bundling, the root $\lambda$, the mix $E + \mu\bar E$ --- and nothing else. In particular it is defined for systems whose coefficients are merely continuous, a class on which the pipeline of \cite{mass} (which differentiates the principal coefficients at its Step 2) does not operate.
\end{remark}

\begin{remark}[The conversion preserves the solution set]\label{rem:invertible}
The passage $E \mapsto E + \mu\,\bar E$ is, at each point, the real-linear
self-map of $\C$
\[
T_\mu(\xi) \;:=\; \xi + \mu\,\bar\xi, \qquad \det T_\mu = 1 - |\mu|^2,
\]
invertible by ellipticity. Its inverse is the same mixing run with $-\mu$,
normalized by the determinant:
\[
T_{-\mu}\circ T_\mu \;=\; (1 - |\mu|^2)\,\mathrm{Id},
\qquad\text{so}\qquad
T_\mu^{-1}(\eta) \;=\; \frac{\eta - \mu\,\bar\eta}{1 - |\mu|^2},
\]
as one checks by expanding
$T_{-\mu}(\xi + \mu\bar\xi) = \xi + \mu\bar\xi - \mu(\bar\xi + \bar\mu\xi)
= (1 - |\mu|^2)\,\xi$. Consequently the framed equation \eqref{eq:framed} of
Theorem~\ref{thm:conversion} has exactly the solutions of the original system:
applying $T_{-\mu}$ to $E + \mu\bar E = 0$ gives $(1 - |\mu|^2)\,E = 0$, hence
$E = 0$, and the converse is immediate. The conversion is thus a genuine normal
form, not merely a derived equation. Like the conversion itself, the inverse is
pointwise, differentiates nothing, and remains valid where $\mu$ is merely
measurable (Section~\ref{sec:measurable}).
\end{remark}

\begin{lemma}[Closed form of the converted frame]\label{lem:frame-closed-form}
For the conversion $E + \mu\bar E$ of Theorem~\ref{thm:conversion}, the frame is
\[
\Phi \;=\; \bigl(1 + \sqrt{\delta}\,\bigr) \;+\; \tfrac{i}{2}\,(a_{21} - a_{12}),
\qquad \Psi = \Phi - 2,
\]
where $\delta = a_{11}a_{22} - \tfrac14(a_{12}+a_{21})^2$ is the ellipticity
discriminant of \eqref{eq:skeleton}. In particular $\Phi$ does not depend on
$\mu$: it is determined by $\delta$ and the antisymmetric part $a_{21} - a_{12}$
alone.
\end{lemma}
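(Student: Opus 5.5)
We compute $\Phi = s + \mu\bar r$ directly from the definitions \eqref{eq:rs} and \eqref{eq:mu-def}, splitting it into real and imaginary parts. Since $s = \tfrac12(a_{11}+a_{22}+2) + \tfrac{i}{2}(a_{21}-a_{12})$, it suffices to show that $\mu\bar r$ is real and equals $\sqrt{\delta} - \tfrac12(a_{11}+a_{22})$. The real part of $\Phi$ then becomes $1+\sqrt\delta$, and the imaginary part is carried by $s$ alone.

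\textbf{Real part.} This is essentially done in the proof of Theorem~\ref{thm:conversion}. There we found $4\operatorname{Re}\Phi - 4 = 2a_{11}\sqrt{\Delta}$, so $\operatorname{Re}\Phi = 1 + \tfrac12 a_{11}\sqrt\Delta$. By \eqref{eq:structure-data}, $\sqrt\Delta = 2\sqrt\delta/a_{11}$, using $a_{11}>0$. Hence $\tfrac12 a_{11}\sqrt\Delta = \sqrt\delta$ and $\operatorname{Re}\Phi = 1+\sqrt\delta$.

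\textbf{Imaginary part.} This is the main step: we must show $\operatorname{Im}(\mu\bar r) = 0$. Write $2\bar r = P - iQ$ with $P = a_{11}(1-\alpha)$ and $Q = -\beta a_{11}$. Write $\mu = [(\alpha-1)+i\beta]/D$ with $D = \alpha+1+\sqrt\Delta$. Then
\[
2D\,\operatorname{Im}(\mu\bar r) = \beta P - (\alpha-1)Q = a_{11}\bigl[\beta(1-\alpha) + \beta(\alpha-1)\bigr] = 0.
\]
So $\mu\bar r$ is real, and $\operatorname{Im}\Phi = \operatorname{Im} s = \tfrac12(a_{21}-a_{12})$.

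As a cross-check, $\mu\bar r = \operatorname{Re}(\mu\bar r) = -a_{11}[(1-\alpha)^2+\beta^2]/(2D)$ from the earlier proof. This should equal $\sqrt\delta - \tfrac12(a_{11}+a_{22})$, which is consistent with the real-part computation above. The formula $\Psi = \Phi - 2$ is already part of Theorem~\ref{thm:conversion}.

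Finally, the closed form involves only $\delta$ and $a_{21}-a_{12}$, with no $\mu$ appearing. This gives the independence claim, read as the statement that the $\mu$-dependence of $s+\mu\bar r$ collapses identically. No obstacle is expected beyond keeping signs straight in $\bar r$.
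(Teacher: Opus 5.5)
Your proposal is correct and follows essentially the paper's proof: the real part comes from the frame determinant $4\operatorname{Re}\Phi - 4 = 2a_{11}\sqrt\Delta$ together with $a_{11}\sqrt\Delta = 2\sqrt\delta$, and the imaginary part from showing that $\mu\bar r$ is real, using $\bar r = \tfrac{a_{11}}{2}\bigl[(1-\alpha)+i\beta\bigr]$ and $\mu = \bigl[(\alpha-1)+i\beta\bigr]/D$. The only cosmetic difference is that you compute just $\operatorname{Im}(\mu\bar r)$, whereas the paper multiplies out the full product to get $\mu\bar r = -\tfrac{a_{11}}{2D}\bigl[(\alpha-1)^2+\beta^2\bigr]$.
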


\begin{proof}
The cross term $\mu\bar r$ is real. By \eqref{eq:rs} and \eqref{eq:structure-data},
$\bar r = \tfrac{a_{11}}{2}\bigl[(1-\alpha) + i\beta\bigr]$, while
$\mu = \bigl[(\alpha-1)+i\beta\bigr]/D$ with $D = \alpha+1+\sqrt\Delta > 0$ by
\eqref{eq:mu-def}, so
\[
\mu\bar r
= \frac{a_{11}}{2D}\bigl[(\alpha-1)+i\beta\bigr]\bigl[(1-\alpha)+i\beta\bigr]
= -\frac{a_{11}}{2D}\bigl[(\alpha-1)^2 + \beta^2\bigr] \in \R.
\]
Hence $\operatorname{Im}\Phi = \operatorname{Im} s = \tfrac12(a_{21}-a_{12})$ by
\eqref{eq:rs}. For the real part, \eqref{eq:frame-det} gives
$|\Phi|^2 - |\Psi|^2 = 4\operatorname{Re}\Phi - 4 = 2a_{11}\sqrt\Delta$, and
$a_{11}\sqrt\Delta = 2\sqrt\delta$ by \eqref{eq:structure-data}; thus
$\operatorname{Re}\Phi = 1 + \sqrt\delta$.
\end{proof}

\section{The substitution law}\label{sec:substitution}

From here on the frame and the substitutions are $C^1$; the Beltrami coefficient is continuous with $|\mu| < 1$, and the lower-order data remain $C^0$. Throughout, $\mu$ is never differentiated: it enters every identity of this and the following two sections only as a bounded multiplier inside $L = \bar\partial - \mu\,\partial$ and $M = \partial - \bar\mu\,\bar\partial$.

\begin{proposition}[Substitution law]\label{prop:substitution}
The framed class is closed under the recombinations of the unknown
\[
w \;=\; \varphi\, w' + \psi\, \bar w', \qquad \varphi, \psi \in C^1(\Omega; \C), \quad |\varphi| > |\psi| \text{ pointwise},
\]
with the data of \eqref{eq:framed} transforming by
\begin{align}
(\Phi, \Psi) \;&\longmapsto\; \bigl(\Phi\varphi + \Psi\bar\psi,\;\; \Phi\psi + \Psi\bar\varphi\bigr),
\qquad \mu \;\longmapsto\; \mu, \label{eq:frame-law}\\[2pt]
\fa \;&\longmapsto\; \Phi\,(L\varphi) + \Psi\,\overline{(M\psi)} + \fa\,\varphi + \fb\,\bar\psi, \label{eq:a-law}\\
\fb \;&\longmapsto\; \Phi\,(L\psi) + \Psi\,\overline{(M\varphi)} + \fa\,\psi + \fb\,\bar\varphi, \label{eq:b-law}
\end{align}
and $\ff$ unchanged. The law \eqref{eq:frame-law} is the composition of the pointwise real-linear maps $z \mapsto \varphi z + \psi\bar z$; in particular
\begin{equation}\label{eq:det-multiplicative}
|\Phi'|^2 - |\Psi'|^2 \;=\; \bigl(|\Phi|^2 - |\Psi|^2\bigr)\bigl(|\varphi|^2 - |\psi|^2\bigr),
\end{equation}
so the frame condition $|\Phi| > |\Psi|$ is preserved and the class is closed.
\end{proposition}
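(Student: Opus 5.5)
Substitute $w = \varphi w' + \psi\bar w'$ directly into \eqref{eq:framed} and expand each principal term with the Leibniz rule, using that $L$ and $M$ are first-order derivations with $\mu$ entering only as a multiplier. Concretely,
\[
L(\varphi w') = \varphi\, Lw' + w'\, L\varphi, \qquad L(\psi\bar w') = \psi\, L\bar w' + \bar w'\, L\psi = \psi\,\overline{Mw'} + \bar w'\, L\psi,
\]
where the last step is the intertwining \eqref{eq:intertwine}. For the conjugate slot I first compute $M w = \varphi\, Mw' + w'\,M\varphi + \psi\,M\bar w' + \bar w'\,M\psi$. By \eqref{eq:intertwine}, $M\bar w' = \overline{Lw'}$, so conjugating gives
\[
\overline{Mw} = \bar\varphi\,\overline{Mw'} + \bar w'\,\overline{M\varphi} + \bar\psi\, Lw' + w'\,\overline{M\psi}.
\]
The lower-order terms are pure algebra: $\fa w + \fb\bar w = (\fa\varphi + \fb\bar\psi)w' + (\fa\psi + \fb\bar\varphi)\bar w'$.

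Next I collect coefficients in the four slots $Lw'$, $\overline{Mw'}$, $w'$, $\bar w'$. The coefficient of $Lw'$ is $\Phi\varphi + \Psi\bar\psi$, and the coefficient of $\overline{Mw'}$ is $\Phi\psi + \Psi\bar\varphi$; this is \eqref{eq:frame-law}. The coefficient of $w'$ is $\Phi L\varphi + \Psi\overline{M\psi} + \fa\varphi + \fb\bar\psi$, which is \eqref{eq:a-law}. The coefficient of $\bar w'$ is $\Phi L\psi + \Psi\overline{M\varphi} + \fa\psi + \fb\bar\varphi$, which is \eqref{eq:b-law}. The right side $\ff$ is untouched, and $\mu$ is unchanged because the operators $L$ and $M$ themselves were never altered. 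The regularity is consistent: the new frame is $C^1$ as a product of $C^1$ functions, and the new lower-order data are $C^0$ since only first derivatives of $\varphi,\psi$ appear.

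For the frame law, identify $(\Phi,\Psi)$ with the real-linear map $A_{\Phi,\Psi}: \xi \mapsto \Phi\xi + \Psi\bar\xi$. Composing with $\xi\mapsto\varphi\xi+\psi\bar\xi$ gives
\[
\Phi(\varphi\xi + \psi\bar\xi) + \Psi(\bar\varphi\bar\xi + \bar\psi\xi),
\]
which is exactly $A_{\Phi',\Psi'}$. The real determinant of $A_{\Phi,\Psi}$ is $|\Phi|^2 - |\Psi|^2$, a standard check with the $2\times2$ real matrix. Multiplicativity of determinants then gives \eqref{eq:det-multiplicative}, so the product of two positive quantities keeps $|\Phi'| > |\Psi'|$ and the framed class is closed.

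The only delicate step is the conjugate slot: one must track that $\overline{M(\psi\bar w')}$ produces $\bar\psi\,Lw'$ (via $M\bar w' = \overline{Lw'}$) and not a new type of slot. This is exactly what lets the four-slot class close, so I would verify that conjugation carefully and check it on the trivial case $\psi=0$, where the law must reduce to the gauge law of \cite{mass}.
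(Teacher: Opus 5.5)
Your proposal is correct and matches the paper's proof: you substitute, apply the Leibniz rule together with the intertwining $L\bar g = \overline{Mg}$, $M\bar g = \overline{Lg}$, conjugate the $M$-expansion, and collect coefficients on the four slots $Lw'$, $\overline{Mw'}$, $w'$, $\bar w'$. The only cosmetic difference is the determinant identity: you obtain it from multiplicativity of real determinants under composition, while the paper checks it by direct expansion, where the cross terms $2\operatorname{Re}(\Phi\varphi\bar\Psi\psi)$ cancel.
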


\begin{proof}
By the product rule and the intertwining \eqref{eq:intertwine},
\[
L(\varphi w') = \varphi\, Lw' + (L\varphi)\, w', \qquad
L(\psi\bar w') = \psi\,\overline{Mw'} + (L\psi)\,\bar w',
\]
\[
M(\varphi w') = \varphi\, Mw' + (M\varphi)\, w', \qquad
M(\psi\bar w') = \psi\,\overline{Lw'} + (M\psi)\,\bar w' .
\]
Substituting $w = \varphi w' + \psi\bar w'$ into \eqref{eq:framed}, conjugating the $M$-lines, and collecting the coefficients of $Lw'$, $\overline{Mw'}$, $w'$, $\bar w'$ yields \eqref{eq:frame-law}--\eqref{eq:b-law}. Identity \eqref{eq:det-multiplicative} is the multiplicativity of determinants of real-linear maps of $\C$, by direct expansion:
$|\Phi\varphi + \Psi\bar\psi|^2 - |\Phi\psi + \Psi\bar\varphi|^2 = (|\Phi|^2 - |\Psi|^2)(|\varphi|^2 - |\psi|^2)$, the mixed terms $2\operatorname{Re}(\Phi\varphi\bar\Psi\psi)$ cancelling between the two squares.
\end{proof}

We also record the trivial recombination of the equation itself, $E \mapsto c\,E$ with $c \in C^0(\Omega; \C^*)$, under which
\begin{equation}\label{eq:row-scaling}
(\Phi, \Psi, \fa, \fb, \ff) \;\longmapsto\; c\,(\Phi, \Psi, \fa, \fb, \ff), \qquad \mu \;\longmapsto\; \mu .
\end{equation}

\section{The diffeomorphism law}\label{sec:diffeo}

\begin{proposition}[Diffeomorphism law]\label{prop:diffeo}
Let $F : \Omega \to \Omega'$ be an orientation-preserving $C^1$ diffeomorphism, $J := |F_z|^2 - |F_{\bar z}|^2 > 0$ its Jacobian, and let $w$ satisfy the framed equation \eqref{eq:framed} on $\Omega'$. Then $h := w \circ F$ satisfies on $\Omega$ the framed equation with data
\begin{equation}\label{eq:diffeo-law}
\tilde\mu \;=\; \frac{F_{\bar z} + (\mu\circ F)\,\overline{F_z}}{F_z + (\mu\circ F)\,\overline{F_{\bar z}}}, \qquad
(\tilde\Phi, \tilde\Psi) = \rho\,\bigl(\Phi\circ F,\; \Psi\circ F\bigr), \qquad
(\tilde\fa, \tilde\fb, \tilde\ff) = \bigl(\fa, \fb, \ff\bigr)\circ F,
\end{equation}
where
\[
\rho \;:=\; \frac{F_z + (\mu\circ F)\,\overline{F_{\bar z}}}{J} \;\neq\; 0 .
\]
Moreover $|\tilde\mu| < 1$, with
\begin{equation}\label{eq:mu-tilde-det}
1 - |\tilde\mu|^2 \;=\; \frac{1 - |\mu\circ F|^2}{|\rho|^2\, J}\,,
\end{equation}
and $|\tilde\Phi| > |\tilde\Psi|$; the class is closed. The law touches no derivative of the data: the frame picks up the single weight $\rho$, and the lower-order data simply compose.
\end{proposition}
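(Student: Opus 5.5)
The plan is to express the derivatives of $w$ at $F(p)$ through those of $h = w\circ F$ at $p$ by the chain rule, substitute into \eqref{eq:framed}, and match the result slot by slot against a framed equation on $\Omega$. Write $\zeta = F(z)$ and $a := F_z$, $c := F_{\bar z}$. The chain rule gives
\[
h_z = a\,w_\zeta + \bar c\,w_{\bar\zeta}, \qquad h_{\bar z} = c\,w_\zeta + \bar a\,w_{\bar\zeta}.
\]
Inverting this $2\times 2$ system, whose determinant is $|a|^2 - |c|^2 = J > 0$, yields
\[
w_{\bar\zeta} = (a h_{\bar z} - c h_z)/J, \qquad w_\zeta = (\bar a h_z - \bar c h_{\bar z})/J.
\]
Everything is evaluated at $p$ with $\mu$ read as $\mu\circ F$. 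No derivative of $\mu$ or of the lower-order data ever appears, only first derivatives of $F$.

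The key computation is the principal part. Substituting the inverted formulas gives
\[
L w = w_{\bar\zeta} - \mu w_\zeta = \frac{(a + \mu\bar c)\,h_{\bar z} - (c + \mu\bar a)\,h_z}{J}.
\]
Factoring out $a + \mu\bar c$, this becomes $\rho\,(h_{\bar z} - \tilde\mu\,h_z) = \rho\,\tilde L h$, with $\rho$ and $\tilde\mu$ exactly as in \eqref{eq:diffeo-law}. Here $a + \mu\bar c \neq 0$ because $|\mu| < 1$ and $|c| < |a|$.

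For the conjugate slot, I would not compute afresh. Instead I would use $\overline{Mw} = L\bar w$ from \eqref{eq:intertwine}, apply the same identity to $\bar h = \bar w\circ F$, and obtain $\overline{Mw} = \rho\,\tilde L\bar h = \rho\,\overline{\tilde M h}$. The weight is the same $\rho$, since $\rho$ depends only on $F$ and $\mu$, not on the function acted on. This step is what makes the frame pick up a single common factor, $(\tilde\Phi, \tilde\Psi) = \rho\,(\Phi, \Psi)\circ F$. The terms $\fa w + \fb\bar w - \ff$ involve no derivatives, so they simply compose with $F$.

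It remains to check that the new data are admissible.
\begin{itemize}
\item Since $\rho \neq 0$, we get $|\tilde\Phi|^2 - |\tilde\Psi|^2 = |\rho|^2\bigl(|\Phi|^2 - |\Psi|^2\bigr)\circ F > 0$.
\item For \eqref{eq:mu-tilde-det}, expand
\[
|a + \mu\bar c|^2 - |c + \mu\bar a|^2 = (|a|^2 - |c|^2)(1 - |\mu|^2),
\]
the cross terms $2\operatorname{Re}(\bar a\mu\bar c)$ cancelling, as in the determinant identity \eqref{eq:det-multiplicative}.
\item Dividing by $|a + \mu\bar c|^2 = |\rho|^2 J^2$ gives $1 - |\tilde\mu|^2 = J(1 - |\mu|^2)/(|\rho|^2 J^2)$, which is \eqref{eq:mu-tilde-det}, and in particular $|\tilde\mu| < 1$.
\item Continuity of all the new data follows from $F \in C^1$.
\end{itemize}

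The main obstacle is bookkeeping rather than anything conceptual. One must confirm that the numerator of $\tilde\mu$, namely $c + \mu\bar a$, matches the stated $F_{\bar z} + \mu\overline{F_z}$, which it does. One must also keep the conjugations straight in the $\overline{Mw}$ slot. Routing that slot through the intertwining \eqref{eq:intertwine} is the cleanest way to avoid sign and conjugation errors there.
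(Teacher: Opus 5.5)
Your proof is correct and follows the paper's argument essentially line for line: the chain rule, inversion with determinant $J$, factoring $(Lw)\circ F = \rho\,\tilde L h$, and the cross-term cancellation that yields \eqref{eq:mu-tilde-det} and $\rho \neq 0$. The only cosmetic difference is in the conjugate slot: you apply the $L$-identity to $\bar w$ via the intertwining \eqref{eq:intertwine}, whereas the paper computes $(Mw)\circ F = \bar\rho\,\tilde M h$ directly and conjugates, and the two are equivalent.
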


\begin{proof}
Viewing $w$ as a function of $(\zeta, \bar\zeta)$ on $\Omega'$, the chain rule reads
\[
h_z = F_z\,(w_\zeta\circ F) + \overline{F_{\bar z}}\,(w_{\bar\zeta}\circ F), \qquad
h_{\bar z} = F_{\bar z}\,(w_\zeta\circ F) + \overline{F_z}\,(w_{\bar\zeta}\circ F),
\]
which inverts, with determinant $J$, to
\[
w_\zeta\circ F = \frac{\overline{F_z}\, h_z - \overline{F_{\bar z}}\, h_{\bar z}}{J}, \qquad
w_{\bar\zeta}\circ F = \frac{F_z\, h_{\bar z} - F_{\bar z}\, h_z}{J} .
\]
Hence
\[
(Lw)\circ F
= \frac{\bigl(F_z + (\mu\circ F)\overline{F_{\bar z}}\bigr)\,h_{\bar z} - \bigl(F_{\bar z} + (\mu\circ F)\overline{F_z}\bigr)\,h_z}{J}
\;=\; \rho\,\bigl(h_{\bar z} - \tilde\mu\, h_z\bigr) \;=\; \rho\,\tilde L h,
\]
with $\tilde L := \bar\partial - \tilde\mu\,\partial$, and by the conjugate computation $(Mw)\circ F = \bar\rho\,\tilde M h$, so $\overline{Mw}\circ F = \rho\,\overline{\tilde M h}$. Composing \eqref{eq:framed} with $F$ and substituting,
\[
(\Phi\circ F)\,\rho\,\tilde L h \;+\; (\Psi\circ F)\,\rho\,\overline{\tilde M h} \;+\; (\fa\circ F)\, h + (\fb\circ F)\, \bar h \;=\; \ff\circ F,
\]
which is \eqref{eq:diffeo-law}. For \eqref{eq:mu-tilde-det}: expanding numerator and denominator of $1 - |\tilde\mu|^2$,
\[
\bigl|F_z + (\mu\circ F)\overline{F_{\bar z}}\bigr|^2 - \bigl|F_{\bar z} + (\mu\circ F)\overline{F_z}\bigr|^2
= \bigl(1 - |\mu\circ F|^2\bigr)\,\bigl(|F_z|^2 - |F_{\bar z}|^2\bigr),
\]
the cross terms $2\operatorname{Re}\bigl((\mu\circ F)\overline{F_{\bar z}}\,\overline{F_z}\bigr)$ cancelling between the two squares; dividing by $|F_z + (\mu\circ F)\overline{F_{\bar z}}|^2 = |\rho|^2 J^2$ gives \eqref{eq:mu-tilde-det}. In particular the left side of \eqref{eq:mu-tilde-det} is positive, so $|\tilde\mu| < 1$; and $\rho \neq 0$ since $|\mu\circ F|\,|\overline{F_{\bar z}}| < |F_z|$. Finally $|\tilde\Phi| > |\tilde\Psi|$ because the frame scales by the common factor $\rho$.
\end{proof}

\section{The pseudo-analytic mass of the framed form}\label{sec:mass}

\begin{definition}\label{def:mass}
For a framed equation \eqref{eq:framed} with $C^1$ frame and continuous Beltrami coefficient ($|\mu| < 1$), define the \emph{$L$-Wronskian} of the frame and the \emph{numerator field}
\[
W_L(\Phi, \Psi) \;:=\; \Phi\, L\Psi \;-\; \Psi\, L\Phi, \qquad
N \;:=\; \Phi\,\fb \;-\; \Psi\,\fa \;-\; W_L(\Phi, \Psi),
\]
and the 2-form and total mass --- the \emph{pseudo-analytic mass} of the framed equation ---
\begin{equation}\label{eq:mass-def}
\Theta \;:=\; \frac{|N|^2}{\bigl(|\Phi|^2 - |\Psi|^2\bigr)^2\,\bigl(1 - |\mu|^2\bigr)}\; dx\, dy,
\qquad
\Mcal \;:=\; \int_\Omega \Theta \;\in\; [0, \infty] .
\end{equation}
\end{definition}

\begin{theorem}[Pseudo-analytic mass of the framed form]\label{thm:framed-mass}
The form $\Theta$ is:
\begin{itemize}
\item[(i)] invariant under every substitution $w = \varphi w' + \psi\bar w'$, $\varphi, \psi \in C^1$, $|\varphi| > |\psi|$, acting by \eqref{eq:frame-law}--\eqref{eq:b-law}; indeed the numerator obeys the exact law
\begin{equation}\label{eq:N-law}
N' \;=\; \bigl(|\varphi|^2 - |\psi|^2\bigr)\, N ;
\end{equation}
\item[(ii)] invariant under the scalings \eqref{eq:row-scaling}: $N' = c^2 N$;
\item[(iii)] equal, at the trivial frame $(\Phi, \Psi) = (1, 0)$, to the density $|\fb|^2\,(1 - |\mu|^2)^{-1}\,dx\,dy$ of the Beltrami--Vekua equation \cite{mass};
\item[(iv)] covariant under every orientation-preserving $C^1$ diffeomorphism $F$, acting by \eqref{eq:diffeo-law}: $\tilde\Theta = F^*\Theta$, and hence $\Mcal$ is unchanged.
\end{itemize}
\end{theorem}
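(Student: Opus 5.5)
The plan is to treat the four items separately. Items (ii) and (iii) are immediate; (iv) is a short bookkeeping argument; (i) is the substantive computation.

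\textbf{Item (i).} I will prove the numerator law \eqref{eq:N-law} by direct expansion, using the laws \eqref{eq:frame-law}--\eqref{eq:b-law} of Proposition~\ref{prop:substitution}. Write $\Phi' = \Phi\varphi + \Psi\bar\psi$ and $\Psi' = \Phi\psi + \Psi\bar\varphi$, with $\fa'$ and $\fb'$ as in \eqref{eq:a-law}--\eqref{eq:b-law}.

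First, the algebraic part $\Phi'\fb' - \Psi'\fa'$ restricted to the terms in $\fa,\fb$. The $\fa$ terms give $\fa\bigl[(\Phi\varphi+\Psi\bar\psi)\psi - (\Phi\psi+\Psi\bar\varphi)\varphi\bigr] = -\fa\Psi(|\varphi|^2-|\psi|^2)$. The $\fb$ terms give $\fb\Phi(|\varphi|^2-|\psi|^2)$ in the same way. So the algebraic part reproduces $(|\varphi|^2-|\psi|^2)(\Phi\fb-\Psi\fa)$.

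Second, the derivative part. It remains to show that $\Phi'[\Phi L\psi + \Psi\overline{M\varphi}] - \Psi'[\Phi L\varphi + \Psi\overline{M\psi}] - W_L(\Phi',\Psi') = -(|\varphi|^2-|\psi|^2)\,W_L(\Phi,\Psi)$.
\begin{itemize}
\item Expand $W_L(\Phi',\Psi') = \Phi' L\Psi' - \Psi' L\Phi'$ by the Leibniz rule.
\item Replace $L\bar\psi$ by $\overline{M\psi}$ and $L\bar\varphi$ by $\overline{M\varphi}$, using the intertwining \eqref{eq:intertwine}.
\item Sort the result into three groups: terms carrying derivatives of the frame $(L\Phi, L\Psi)$, terms carrying derivatives of the substitution $(L\varphi, L\psi, \overline{M\varphi}, \overline{M\psi})$, and nothing else.
\end{itemize}
The frame-derivative group should be $(\Phi\varphi+\Psi\bar\psi)(\psi L\Phi + \bar\varphi L\Psi) - (\Phi\psi+\Psi\bar\varphi)(\varphi L\Phi + \bar\psi L\Psi)$. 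This simplifies to $(|\varphi|^2-|\psi|^2)(\Phi L\Psi - \Psi L\Phi)$.

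The substitution-derivative group of $W_L(\Phi',\Psi')$ is $\Phi'(\Phi L\psi + \Psi\overline{M\varphi}) - \Psi'(\Phi L\varphi + \Psi\overline{M\psi})$. This is exactly the contaminating expression above, so the two cancel identically.

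I expect this sorting to be the main obstacle. It is the only place where sign or conjugation errors can occur, and I would organize it as a $2\times 2$ determinant-style bookkeeping so the cancellation is visible.

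Having \eqref{eq:N-law}, invariance of $\Theta$ follows. Combining it with \eqref{eq:det-multiplicative} and $\mu'=\mu$, the factor $(|\varphi|^2-|\psi|^2)^2$ cancels between $|N'|^2$ and the squared determinant.

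\textbf{Item (ii).} Under \eqref{eq:row-scaling} we have $\Phi\fb-\Psi\fa \mapsto c^2(\cdot)$. For the Wronskian, $W_L(c\Phi,c\Psi) = c^2W_L(\Phi,\Psi)$, because the $Lc$ terms cancel. The determinant scales by $|c|^2$, so $\Theta$ gains $|c|^4/|c|^4 = 1$.

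\textbf{Item (iii).} At the trivial frame $(\Phi,\Psi)=(1,0)$ the Wronskian is zero, so $N=\fb$ and $\Theta$ reduces to the density of \cite{mass}.

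\textbf{Item (iv).} Use Proposition~\ref{prop:diffeo}. Since $\tilde\Phi = \rho\,\Phi\circ F$ and $\tilde\Psi = \rho\,\Psi\circ F$, and $\tilde L = \rho^{-1}(L\circ F)$ on composed functions, we get
\[
\tilde W = \rho\Phi\,\tilde L(\rho\Psi) - \rho\Psi\,\tilde L(\rho\Phi) = \rho^2\,\Phi\,\tilde L\Psi - \rho^2\,\Psi\,\tilde L\Phi = \rho\,(W_L\circ F).
\]
Here the $\tilde L\rho$ terms cancel, and the last step uses $\tilde L(g\circ F) = \rho^{-1}(Lg)\circ F$, which follows from the chain-rule identity in the proof of Proposition~\ref{prop:diffeo}. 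The lower-order data simply compose, so $\tilde N = \rho\,(N\circ F)$.

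Then, using \eqref{eq:mu-tilde-det},
\[
\tilde\Theta = \frac{|\rho|^2\,|N\circ F|^2}{|\rho|^4\,(\cdot)^2\circ F \cdot (1-|\mu\circ F|^2)\,/\,(|\rho|^2 J)}\,dx\,dy = (\Theta\text{-density}\circ F)\,J\,dx\,dy = F^*\Theta.
\]
The change-of-variables formula then gives invariance of $\Mcal$.
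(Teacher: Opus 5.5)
Your items (i)--(iii) follow the paper's proof essentially verbatim. In (i), the frame-derivative terms of $W_L(\Phi',\Psi')$ collect to $D\,W_L(\Phi,\Psi)$ through the brackets $-\Psi D$ and $\Phi D$. The substitution-derivative terms reproduce exactly the contamination \eqref{eq:contamination} that $\Phi'\fb'-\Psi'\fa'$ carries, so the two cancel.

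The gap is in (iv). You compute $\tilde W$ on the raw pullback, whose frame is $\rho\,(\Phi\circ F,\Psi\circ F)$, and let the $\tilde L\rho$ terms cancel. But $\rho=\bigl(F_z+(\mu\circ F)\overline{F_{\bar z}}\bigr)/J$ is only continuous, for two reasons:
\begin{itemize}
\item $F$ is merely $C^1$, so $F_z$, $F_{\bar z}$ and $J$ are only $C^0$;
\item independently of $F$, $\mu$ is only continuous throughout this theory, so $\mu\circ F$ is not differentiable even for smooth $F$.
\end{itemize}
Consequently $\tilde L(\rho\,\Psi\circ F)$ does not exist, and the raw pulled-back frame is not $C^1$. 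So $\tilde\Theta$ is not even defined on that representative by Definition~\ref{def:mass}. Your cancellation of the $\tilde L\rho$ terms is a formal manipulation that implicitly differentiates $\mu$, which is exactly what the construction is built to avoid. Your algebra is correct, and your argument would go through if $F\in C^2$ and $\mu\in C^1$, but not under the stated hypotheses.

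The repair is the one the paper uses.
\begin{enumerate}
\item First apply the row scaling \eqref{eq:row-scaling} with $c=\rho^{-1}$. This gives the representative with $C^1$ frame $(\Phi\circ F,\Psi\circ F)$ and lower-order data $\rho^{-1}(\fa,\fb,\ff)\circ F$, and you read $\tilde\Theta$ off it.
\item The value does not depend on this choice. Any two $C^1$-framed scalings of the pulled-back equation differ by a $C^1$ nonvanishing factor, namely the ratio of their $\Phi$'s, so (ii) applies.
\item Apply your own chain-rule identity $\tilde L(g\circ F)=\rho^{-1}(Lg)\circ F$ only to $g=\Phi,\Psi$. This gives $W_{\tilde L}(\Phi\circ F,\Psi\circ F)=\rho^{-1}\,W_L(\Phi,\Psi)\circ F$, and no derivative of $\rho$ is ever taken.
\item Hence $\tilde N=\rho^{-1}(N\circ F)$. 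The same assembly with \eqref{eq:mu-tilde-det} then yields $\tilde\Theta=F^*\Theta$: the factor $|\rho|^{-2}$ in $|\tilde N|^2$ cancels against the $|\rho|^{2}J$ in $(1-|\tilde\mu|^2)^{-1}$, leaving $J$.
\end{enumerate}
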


\begin{proof}
(i) Write $D := |\varphi|^2 - |\psi|^2$. We compute $W_L(\Phi', \Psi')$ for the new frame \eqref{eq:frame-law}. By the product rule and \eqref{eq:intertwine} (which gives $L\bar\varphi = \overline{M\varphi}$, $L\bar\psi = \overline{M\psi}$),
\[
L\Psi' = (L\Phi)\,\psi + \Phi\,(L\psi) + (L\Psi)\,\bar\varphi + \Psi\,\overline{M\varphi}, \qquad
L\Phi' = (L\Phi)\,\varphi + \Phi\,(L\varphi) + (L\Psi)\,\bar\psi + \Psi\,\overline{M\psi}.
\]
In $W_L(\Phi', \Psi') = \Phi' L\Psi' - \Psi' L\Phi'$, the terms carrying derivatives of the \emph{old frame} collect as
\[
(L\Phi)\bigl[(\Phi\varphi + \Psi\bar\psi)\psi - (\Phi\psi + \Psi\bar\varphi)\varphi\bigr]
+ (L\Psi)\bigl[(\Phi\varphi + \Psi\bar\psi)\bar\varphi - (\Phi\psi + \Psi\bar\varphi)\bar\psi\bigr]
= D\,\bigl(\Phi\,L\Psi - \Psi\,L\Phi\bigr),
\]
since the first bracket is $-\Psi D$ and the second is $\Phi D$. The terms carrying derivatives of the \emph{substitution} collect as
\begin{equation}\label{eq:contamination}
\Phi'\bigl[\Phi(L\psi) + \Psi\overline{M\varphi}\bigr] \;-\; \Psi'\bigl[\Phi(L\varphi) + \Psi\overline{M\psi}\bigr] .
\end{equation}
Therefore
\[
W_L(\Phi', \Psi') \;=\; D\,W_L(\Phi, \Psi) \;+\; \eqref{eq:contamination}.
\]
On the other hand, by \eqref{eq:a-law}--\eqref{eq:b-law},
\[
\Phi'\fb' - \Psi'\fa'
= \underbrace{\Phi'\bigl[\Phi(L\psi) + \Psi\overline{M\varphi}\bigr] - \Psi'\bigl[\Phi(L\varphi) + \Psi\overline{M\psi}\bigr]}_{=\;\eqref{eq:contamination}}
\;+\; \fa\bigl[\Phi'\psi - \Psi'\varphi\bigr] + \fb\bigl[\Phi'\bar\varphi - \Psi'\bar\psi\bigr],
\]
and the same two brackets as before give $\Phi'\psi - \Psi'\varphi = -\Psi D$ and $\Phi'\bar\varphi - \Psi'\bar\psi = \Phi D$. Subtracting, the contamination \eqref{eq:contamination} cancels identically:
\[
N' \;=\; \Phi'\fb' - \Psi'\fa' - W_L(\Phi', \Psi')
\;=\; D\,\bigl(\Phi\fb - \Psi\fa\bigr) - D\,W_L(\Phi, \Psi)
\;=\; D\,N,
\]
which is \eqref{eq:N-law}. Then by \eqref{eq:det-multiplicative}, and since $\mu$ and $dx\,dy$ do not move,
\[
\Theta' = \frac{D^2\,|N|^2}{D^2\,(|\Phi|^2 - |\Psi|^2)^2\,(1 - |\mu|^2)}\,dx\,dy = \Theta .
\]

(ii) $\Phi'\fb' - \Psi'\fa' = c^2(\Phi\fb - \Psi\fa)$, and in $W_L(c\Phi, c\Psi)$ the Leibniz cross terms carry the factor $\Phi\Psi - \Psi\Phi = 0$, so $W_L(c\Phi, c\Psi) = c^2\,W_L(\Phi, \Psi)$ and $N' = c^2 N$; the numerator of $\Theta$ scales by $|c|^4$ and the denominator by $|c|^4$.

(iii) Immediate: $W_L(1, 0) = 0$ and $N = \fb$.

(iv) The pulled-back frame $\rho\,(\Phi\circ F, \Psi\circ F)$ need not be $C^1$, as $\rho$ is built from the first derivatives of $F$ alone; we therefore read $\Theta$ off the equivalent representative produced by the row scaling \eqref{eq:row-scaling} with $c = \rho^{-1}$, in which $\rho$ cancels and the frame $(\Phi\circ F, \Psi\circ F)$ is again $C^1$, with data
\[
\tilde\mu, \qquad (\Phi\circ F,\; \Psi\circ F), \qquad \rho^{-1}\,(\fa, \fb, \ff)\circ F .
\]
Any two $C^1$-framed scalings of the pulled-back equation differ by a $C^1$ factor bounded below, so by (ii) the value is independent of the choice. For any first-order operator $X$ write $W_X(P, Q) := P\,XQ - Q\,XP$, so that $W_L = W_{\bar\partial} - \mu\, W_{\partial}$. The chain rule for $\tilde L = \bar\partial - \tilde\mu\,\partial$ applied to a composite $g \circ F$ reads
\[
\tilde L(g\circ F) \;=\; \kappa_1\,\bigl(\partial g\circ F\bigr) \;+\; \kappa_2\,\bigl(\bar\partial g\circ F\bigr),
\qquad \kappa_1 := F_{\bar z} - \tilde\mu\, F_z, \quad \kappa_2 := \overline{F_z} - \tilde\mu\,\overline{F_{\bar z}},
\]
and substituting $\tilde\mu$ from \eqref{eq:diffeo-law} gives, by the same cross-term cancellation as in \eqref{eq:mu-tilde-det},
\[
\kappa_1 = -\,\frac{\mu\circ F}{\rho}, \qquad \kappa_2 = \frac{1}{\rho} .
\]
Hence
\[
W_{\tilde L}\bigl(\Phi\circ F,\, \Psi\circ F\bigr)
= \kappa_1\,\bigl(W_{\partial}(\Phi, \Psi)\circ F\bigr) + \kappa_2\,\bigl(W_{\bar\partial}(\Phi, \Psi)\circ F\bigr)
= \frac{1}{\rho}\,\Bigl(\bigl[W_{\bar\partial} - \mu\,W_{\partial}\bigr](\Phi, \Psi)\Bigr)\circ F
= \frac{W_L(\Phi, \Psi)\circ F}{\rho},
\]
and the numerator field of the scaled representative is
\[
\tilde N = (\Phi\circ F)\,\frac{\fb\circ F}{\rho} - (\Psi\circ F)\,\frac{\fa\circ F}{\rho} - \frac{W_L(\Phi,\Psi)\circ F}{\rho}
\;=\; \frac{N\circ F}{\rho}\,.
\]
Assembling with \eqref{eq:mu-tilde-det},
\[
\tilde\Theta
= \frac{|\rho|^{-2}\,\bigl(|N|^2\circ F\bigr)}
{\bigl((|\Phi|^2 - |\Psi|^2)^2\circ F\bigr)\;\dfrac{(1 - |\mu|^2)\circ F}{|\rho|^2\, J}}\;dx\,dy
= \left(\frac{|N|^2}{(|\Phi|^2 - |\Psi|^2)^2\,(1 - |\mu|^2)}\right)\!\circ F\;\; J\; dx\, dy
\;=\; F^*\Theta,
\]
since $F^*(d\xi\,d\eta) = J\,dx\,dy$. Integrating, $\Mcal$ is unchanged.
\end{proof}

\begin{corollary}\label{cor:system-invariant}
Let \eqref{eq:skeleton} have $C^1$ principal and $C^0$ lower-order coefficients, and let $\Theta$ be computed from its conversion $E + \mu\bar E$ via \eqref{eq:conversion-data}, i.e.
\[
\Theta \;=\; \frac{\bigl|\,\Phi\,\fb - \Psi\,\fa - W_L(\Phi, \Psi)\,\bigr|^2}{4\,a_{11}^2\,\Delta\,\bigl(1 - |\mu|^2\bigr)}\; dx\,dy,
\qquad \Phi = s + \mu\bar r, \quad \Psi = \Phi - 2 .
\]
Then $\Mcal = \int_\Omega \Theta$ is unchanged by every $C^1$ recombination of the unknowns $w = \varphi w' + \psi\bar w'$ with $|\varphi| > |\psi|$, by every recombination of the equations, and by every orientation-preserving $C^1$ change of variables.
\end{corollary}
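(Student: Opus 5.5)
The plan is to reduce the corollary to Theorem~\ref{thm:framed-mass}, after checking that the conversion of Theorem~\ref{thm:conversion} lands in the class where that theorem applies.

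\textbf{Step 1: the formula and the regularity.} By Lemma~\ref{lem:frame-closed-form}, $\Phi = (1+\sqrt\delta) + \tfrac{i}{2}(a_{21}-a_{12})$ and $\Psi = \Phi - 2$. Since the principal coefficients are $C^1$ and $\delta > 0$, $\sqrt\delta$ is $C^1$, so the frame is $C^1$. Likewise $\mu$, an algebraic function of $\alpha,\beta,\sqrt\Delta$ with $\Delta>0$, is $C^1$, and in any case continuous, which is all Definition~\ref{def:mass} needs. The lower-order data $\fa,\fb,\ff$ of \eqref{eq:conversion-data} are $C^0$. Hence $\Theta$ is well defined. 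The stated denominator follows from \eqref{eq:frame-det}, since $(|\Phi|^2-|\Psi|^2)^2 = 4a_{11}^2\Delta$.

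\textbf{Step 2: the three symmetries.} For a $C^1$ recombination of unknowns, Proposition~\ref{prop:substitution} keeps the equation framed over the same $\mu$. The new frame $(\Phi\varphi+\Psi\bar\psi,\,\Phi\psi+\Psi\bar\varphi)$ is $C^1$, so Theorem~\ref{thm:framed-mass}(i) gives pointwise invariance of $\Theta$, and hence of $\Mcal$.

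A recombination of the equations acts on $E$ by a pointwise real-linear map. Its effect on the framed data splits into two cases:
\begin{itemize}
\item a complex scaling $E\mapsto cE$ is handled by Theorem~\ref{thm:framed-mass}(ii), provided $c$ is $C^1$ so that the scaled frame stays $C^1$;
\item the general real-linear case $E\mapsto cE+d\bar E$ has to be argued separately.
\end{itemize}
Orientation-preserving $C^1$ diffeomorphisms are Theorem~\ref{thm:framed-mass}(iv), with $\int_\Omega F^*\Theta = \int_{\Omega'}\Theta$ by the change-of-variables formula.

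\textbf{Main obstacle: general recombinations of the equations.} The difficulty is that the framed equation built from a recombined system must be compared with the original one. The plan is to use the fact, from Remark~\ref{rem:invertible}, that the framed equation $G := E+\mu\bar E$ determines $E$ pointwise. A new system $E' = cE + d\bar E$ that is still in skeleton form has the same principal symbol up to row operations, so the same $\mu$. Its conversion $G' = E' + \mu\bar E'$ is then a real-linear pointwise image of $G$ in which the $\bar G$ component must vanish, because $G'$ is again in framed position with $\overline{L w}$-type slots absent. Concretely, the framed slot pattern forces the conjugate-mixing part to be zero, so $G' = cG$.

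I would verify this by writing $G' = \gamma G + \eta\bar G$ and observing that $\bar G$ contributes a nonzero coefficient to the $\overline{Lw}$ slot, namely $\eta\bar\Phi$, which must vanish. Since $|\Phi|>|\Psi|\ge 0$, this gives $\eta=0$. The remaining factor $\gamma$ is $C^1$ because both frames are $C^1$ and $\gamma = \Phi'/\Phi$. Theorem~\ref{thm:framed-mass}(ii) then concludes.
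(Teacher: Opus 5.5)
Your proposal is correct and is essentially the paper's proof. Substitutions and changes of variables go through Theorem~\ref{thm:framed-mass}(i) and (iv), and recombinations of the equations reduce to a scaling handled by Theorem~\ref{thm:framed-mass}(ii); your $\gamma G + \eta\bar G$ computation, where the $\overline{Lw}$-coefficient $\eta\bar\Phi$ forces $\eta = 0$, is an explicit form of the paper's remark that the framed slot positions determine the frame up to a common factor.

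The one step the paper states and you leave implicit is this: substituting into, or pulling back, the \emph{system} and re-converting reproduces the transformed framed equation only up to such a scaling, and your same slot argument supplies it. For a pullback, the re-converted frame is exactly $\Phi\circ F$, because $\delta$ and $a_{21}-a_{12}$ simply compose with $F$ even though the pulled-back principal coefficients are only $C^0$.
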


\begin{proof}
Substitutions and changes of variables: Theorem~\ref{thm:framed-mass}(i),(iv), noting that a substitution or pullback of the system commutes with the bundling and changes its converted framed form by the laws \eqref{eq:frame-law}--\eqref{eq:b-law}, resp.\ \eqref{eq:diffeo-law}, up to a scaling \eqref{eq:row-scaling}. Recombinations of the equations preserve the system; its converted framed forms then differ by a scaling, since they share $\mu$ and the framed slot positions determine the frame up to the common factor; and scalings are Theorem~\ref{thm:framed-mass}(ii).
\end{proof}

\begin{remark}\label{rem:consistency}
At the trivial frame, Theorem~\ref{thm:framed-mass}(iii) identifies $\Theta$ with the density of the pseudo-analytic mass of \cite{mass}; that the invariant of Corollary~\ref{cor:system-invariant} coincides with the output of the seven-step pipeline of \cite{mass} is then the Absorption Theorem of \cite{abs}. The two derivative-bearing terms of the theory deserve a final glance: the substitution's contamination of the lower-order data --- the bracket \eqref{eq:contamination} --- and the transformation defect of the Wronskian are \emph{the same expression}, and the entire invariance (i) is their cancellation. The Wronskian is in the formula so that this can happen.
\end{remark}

\section{The straightening}\label{sec:straightening}

The Beltrami--Vekua class of \cite{mass} is the slice $(\Phi, \Psi) = (1, 0)$ of the framed class. One substitution and one scaling carry any framed equation onto it, in closed form; evaluating the result against Theorem~\ref{thm:framed-mass}(iii) shows that $\Theta$ is exactly the Beltrami--Vekua density of the straightened equation. The mass does not change under straightening.

\begin{proposition}[Straightening]\label{prop:straighten}
Let \eqref{eq:framed} have $C^1$ frame and continuous Beltrami coefficient ($|\mu| < 1$), and set
\[
\nu \;:=\; \Psi/\Phi \;\in\; \D
\]
pointwise ($|\nu| < 1$ is the frame condition). The substitution $w = w' - \nu\,\bar w'$ followed by the scaling \eqref{eq:row-scaling} with $c = \bigl(\Phi\,(1 - |\nu|^2)\bigr)^{-1}$ carries the equation to the trivial frame $(1, 0)$ with $\mu$ unchanged: the Beltrami--Vekua equation
\[
w'_{\bar z} \;-\; \mu\, w'_z \;+\; \Acal_{\mathrm{str}}\, w' \;+\; \Bcal_{\mathrm{str}}\, \bar w' \;=\; \Fcal_{\mathrm{str}},
\]
with
\begin{equation}\label{eq:straightened}
\Acal_{\mathrm{str}} \;=\; \frac{\fa - \bar\nu\,\fb - \Psi\,\overline{M\nu}}{\Phi\,(1 - |\nu|^2)}, \qquad
\Bcal_{\mathrm{str}} \;=\; \frac{\fb - \nu\,\fa - \Phi\,L\nu}{\Phi\,(1 - |\nu|^2)}, \qquad
\Fcal_{\mathrm{str}} \;=\; \frac{\ff}{\Phi\,(1 - |\nu|^2)} .
\end{equation}
Moreover, since $L$ is a derivation,
\begin{equation}\label{eq:quotient}
\Phi^2\, L\nu \;=\; \Phi^2\, L\bigl(\Psi/\Phi\bigr) \;=\; \Phi\,L\Psi - \Psi\,L\Phi \;=\; W_L(\Phi, \Psi),
\end{equation}
and therefore
\begin{equation}\label{eq:Bstr-N}
\Bcal_{\mathrm{str}} \;=\; \frac{N}{\Phi^2\,(1 - |\nu|^2)}\,, \qquad
\bigl|\Phi^2(1 - |\nu|^2)\bigr| \;=\; |\Phi|^2 - |\Psi|^2 .
\end{equation}
Any two substitutions landing on the slice differ by a $\C$-linear substitution: from a frame $(\Phi, 0)$, the law \eqref{eq:frame-law} reads $(\Phi\varphi, \Phi\psi)$, so $\Psi' = 0$ forces $\psi = 0$ --- the gauge action of \cite{mass}.
\end{proposition}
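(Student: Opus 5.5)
The plan is to apply Proposition~\ref{prop:substitution} with $\varphi = 1$ and $\psi = -\nu$, then the scaling \eqref{eq:row-scaling}, and read off the new data. The condition $|\varphi| > |\psi|$ is $|\nu| < 1$, which is exactly the frame condition. Since $\Phi, \Psi \in C^1$ and $\Phi \neq 0$, we have $\nu \in C^1$, so the substitution is admissible.

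\emph{Frame.} By \eqref{eq:frame-law} the new frame is
\[
\Phi' = \Phi - \Psi\bar\nu = \Phi\,(1 - |\nu|^2), \qquad \Psi' = -\Phi\nu + \Psi = 0 ,
\]
using $\Psi = \Phi\nu$. After the scaling by $c = (\Phi(1-|\nu|^2))^{-1}$ the frame becomes $(1,0)$, and $\mu$ is untouched by both operations.

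\emph{Lower-order data.} Since $L1 = M1 = 0$, \eqref{eq:a-law} gives
\[
\fa' = -\Psi\,\overline{M\nu} + \fa - \fb\bar\nu ,
\]
and \eqref{eq:b-law} gives
\[
\fb' = -\Phi\,L\nu + \Psi\,\overline{M1} - \fa\nu + \fb = \fb - \nu\fa - \Phi L\nu .
\]
The forcing $\ff$ is unchanged by the substitution. Multiplying by $c$ yields \eqref{eq:straightened}, and at the trivial frame the equation reads $Lw' + \Acal w' + \Bcal \bar w' = \Fcal$, which is the stated Beltrami--Vekua form.

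\emph{Identity \eqref{eq:quotient}.} $L$ is a first-order derivation, so the quotient rule gives $L(\Psi/\Phi) = (\Phi L\Psi - \Psi L\Phi)/\Phi^2$. Multiplying by $\Phi$ then gives
\[
\Phi L\nu = W_L/\Phi, \qquad \Phi\nu\fa = \Psi\fa/\Phi ,
\]
and so
\[
\fb - \nu\fa - \Phi L\nu = N/\Phi .
\]
Dividing by $\Phi(1-|\nu|^2)$ yields \eqref{eq:Bstr-N}. The modulus identity follows from $|\Phi|^2(1-|\nu|^2) = |\Phi|^2 - |\Psi|^2$.

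\emph{Uniqueness.} Apply \eqref{eq:frame-law} from a frame $(\Phi,0)$ with $\Phi \neq 0$: the new frame is $(\Phi\varphi, \Phi\psi)$, so $\Psi' = 0$ forces $\psi = 0$. To compare two straightening substitutions $S_1, S_2$, I will compose one with the inverse of the other. This requires checking that recombinations form a group: the pointwise inverse of $z \mapsto \varphi z + \psi\bar z$ is
\[
z \mapsto \frac{\bar\varphi z - \psi\bar z}{|\varphi|^2 - |\psi|^2},
\]
which is again $C^1$ with the correct frame condition, and composition corresponds to composing the frame laws. With that in hand, $S_2^{-1}\circ S_1$ maps the slice to the slice, hence is $\C$-linear, i.e.\ a gauge.

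The main obstacle is only bookkeeping: the signs and conjugations in the $\overline{M}$ terms of the contamination, and justifying the group structure for the uniqueness claim.
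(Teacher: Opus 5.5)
Your proposal is correct and follows the paper's own proof. It evaluates the frame and lower-order laws at $(\varphi,\psi)=(1,-\nu)$, scales by $c$, uses the Leibniz (quotient) rule for $L$ to get $\Phi^2 L\nu = W_L(\Phi,\Psi)$, and applies the frame law from $(\Phi,0)$; your inverse-recombination check spells out the group structure that the paper leaves implicit in the uniqueness clause. One slip to fix: $\Phi\nu\fa = \Psi\fa$, so the identity you actually need is $\nu\fa = \Psi\fa/\Phi$, and with it $\fb - \nu\fa - \Phi L\nu = N/\Phi$ holds exactly as you state.
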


\begin{proof}
The frame law \eqref{eq:frame-law} at $(\varphi, \psi) = (1, -\nu)$, with $\Psi = \Phi\nu$:
\[
\bigl(\Phi\cdot 1 + \Psi\,(-\bar\nu),\;\; \Phi\,(-\nu) + \Psi\cdot 1\bigr)
\;=\; \bigl(\Phi\,(1 - |\nu|^2),\;\; 0\bigr) .
\]
The lower-order laws \eqref{eq:a-law}--\eqref{eq:b-law} at $(1, -\nu)$, with $L(1) = M(1) = 0$:
\[
\fa' \;=\; \Phi\,L(1) + \Psi\,\overline{M(-\nu)} + \fa\cdot 1 + \fb\,(-\bar\nu)
\;=\; \fa - \bar\nu\,\fb - \Psi\,\overline{M\nu},
\]
\[
\fb' \;=\; \Phi\,L(-\nu) + \Psi\,\overline{M(1)} + \fa\,(-\nu) + \fb\cdot 1
\;=\; \fb - \nu\,\fa - \Phi\,L\nu .
\]
The scaling \eqref{eq:row-scaling} by $c = 1/\bigl(\Phi(1-|\nu|^2)\bigr)$ normalizes the frame to $(1,0)$ and divides $(\fa', \fb', \ff)$ by $\Phi(1-|\nu|^2)$, giving \eqref{eq:straightened}. Identity \eqref{eq:quotient} is the Leibniz rule for the first-order operator $L$ applied to $\Psi = \nu\,\Phi$: $L\Psi = \nu\,L\Phi + \Phi\,L\nu$, so $\Phi^2 L\nu = \Phi\,L\Psi - \Psi\,L\Phi$. Substituting $\Phi\,L\nu = W_L(\Phi,\Psi)/\Phi$ into \eqref{eq:straightened} gives the first half of \eqref{eq:Bstr-N}; the second half is $|\Phi|^2\,(1 - |\nu|^2) = |\Phi|^2 - |\Psi|^2$. Uniqueness is the displayed frame-law computation.
\end{proof}

\begin{corollary}[The mass is invariant under straightening]\label{cor:straighten-mass}
The straightened equation has trivial frame, so by Theorem~\ref{thm:framed-mass}(iii) its density is $|\Bcal_{\mathrm{str}}|^2\,(1 - |\mu|^2)^{-1}\,dx\,dy$; by \eqref{eq:Bstr-N},
\[
\frac{|\Bcal_{\mathrm{str}}|^2}{1 - |\mu|^2}\; dx\, dy
\;=\; \frac{|N|^2}{\bigl(|\Phi|^2 - |\Psi|^2\bigr)^2\,\bigl(1 - |\mu|^2\bigr)}\; dx\, dy
\;=\; \Theta .
\]
Hence $\Theta$ is the Beltrami--Vekua density of \cite{mass} evaluated on the straightening, $\Mcal$ is the pseudo-analytic mass of the straightened equation, and every framed equation is the substitution image of a Beltrami--Vekua equation with the same $\mu$.
\end{corollary}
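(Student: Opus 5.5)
The corollary combines Proposition~\ref{prop:straighten} with Theorem~\ref{thm:framed-mass}. No new estimate is required; the work is to assemble identities that are already proved.

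\textbf{Step 1.} Apply Proposition~\ref{prop:straighten}. It provides the substitution $(\varphi,\psi)=(1,-\nu)$ with $\nu=\Psi/\Phi$, followed by the scaling $c=(\Phi(1-|\nu|^2))^{-1}$. This lands on the trivial frame $(1,0)$ over the same $\mu$, with conjugate coefficient $\Bcal_{\mathrm{str}}$ given by \eqref{eq:straightened}.

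\textbf{Step 2.} Evaluate the density of the straightened equation with Theorem~\ref{thm:framed-mass}(iii). Since $W_L(1,0)=0$, its numerator field is exactly $\Bcal_{\mathrm{str}}$. Its density is therefore $|\Bcal_{\mathrm{str}}|^2(1-|\mu|^2)^{-1}\,dx\,dy$.

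\textbf{Step 3.} Insert \eqref{eq:Bstr-N}. This uses the quotient identity \eqref{eq:quotient}, $\Phi^2L\nu=W_L(\Phi,\Psi)$, which is just the Leibniz rule for $L$ applied to $\Psi=\nu\Phi$. It gives $\Bcal_{\mathrm{str}}=N/(\Phi^2(1-|\nu|^2))$. Together with $|\Phi^2(1-|\nu|^2)|=|\Phi|^2|1-|\nu|^2|=|\Phi|^2-|\Psi|^2$, which holds because $1-|\nu|^2>0$, the density becomes literally $\Theta$.

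\textbf{Step 4 (cross-check).} The same conclusion follows independently from invariance. By Theorem~\ref{thm:framed-mass}(i), the substitution $(1,-\nu)$ multiplies $N$ by $1-|\nu|^2$ and the frame determinant by the same factor, so it leaves $\Theta$ fixed. By Theorem~\ref{thm:framed-mass}(ii), the scaling leaves $\Theta$ fixed. This route needs $\nu\in C^1$, which holds since $\Phi\neq0$ and $\Phi,\Psi\in C^1$; that is exactly what the statement assumes. It also needs $c$ continuous and nonvanishing. Integrating over $\Omega$ then shows that $\Mcal$ is the pseudo-analytic mass of the straightened equation.

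\textbf{Step 5 (final clause).} Invert the substitution. The map $w'\mapsto w'-\nu\bar w'$ is pointwise invertible, with inverse $w'=(w+\nu\bar w)/(1-|\nu|^2)$, again a $C^1$ recombination with $|\varphi|>|\psi|$. So the original framed equation is, up to the scaling $c^{-1}$, the substitution image of the straightened Beltrami--Vekua equation under this inverse recombination.

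The main point of care is making sure the two routes agree and that the inverse substitution lies in the admissible class. The latter is the only step not written out verbatim earlier, and I would settle it by direct application of the frame law \eqref{eq:frame-law}.
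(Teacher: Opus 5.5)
Your proposal is correct and is essentially the paper's argument: Steps 1--3 are exactly the corollary's display (the trivial frame gives density $|\Bcal_{\mathrm{str}}|^2(1-|\mu|^2)^{-1}$ by Theorem~\ref{thm:framed-mass}(iii), which equals $\Theta$ by \eqref{eq:Bstr-N}), and your Step 4 cross-check via Theorem~\ref{thm:framed-mass}(i)--(ii) is the observation the paper records in Remark~\ref{rem:straighten}. Your Step 5 inversion $w' = (w+\nu\bar w)/(1-|\nu|^2)$ correctly supplies the final clause, which the paper leaves implicit and which, like you, it reads modulo the scaling \eqref{eq:row-scaling}.
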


\begin{remark}\label{rem:straighten}
The corollary is also forced by Theorem~\ref{thm:framed-mass}(i)--(ii), since the straightening is a substitution followed by a scaling; the point of the display is the closed form: the straightened $\Bcal$ \emph{is} the numerator field $N$, normalized by the frame determinant. The straightening is the algebraic half of the comparison with \cite{mass}: the seven-step pipeline is a straightening computed in the spectral frame $w = U + \lambda V$, and the agreement of the two, up to the universal gauge $-i\lambda/(\varphi - \psi)$, is the Absorption Theorem of \cite{abs}. The derivative ledger also closes here: the conversion of Section~\ref{sec:conversion} is pointwise, and the single differentiation of the data that the Beltrami--Vekua slice costs is the term $L\nu$ of \eqref{eq:straightened} --- equivalently, the Wronskian $W_L$ of \eqref{eq:Bstr-N}.
\end{remark}
\section{The mass at measurable regularity}\label{sec:measurable}

The only derivative in the theory is the one inside the $L$-Wronskian. This section records the weakest setting in which that derivative exists as a locally integrable function and every computation of Sections~\ref{sec:substitution}--\ref{sec:mass} remains valid almost everywhere. One cannot ask for less: $N$ enters $\Theta$ squared, and a genuinely distributional $N$ admits no $|N|^2$; \emph{measurable regularity} therefore means measurable data whose frame possesses weak first derivatives, defined almost everywhere. Two preliminary observations fix the classes. First, no derivative of $\mu$, $\fa$, $\fb$ or $\ff$ is taken anywhere in Sections~\ref{sec:substitution}--\ref{sec:mass}: $\mu$ enters every identity there only as a bounded multiplier inside $L$ and $M$. The descent below the smooth class therefore weakens two data independently --- the frame from $C^1$ to $\Wcal$ and the Beltrami coefficient from continuous to measurable --- with the lower-order data passing from $C^0$ to $L^2_{\mathrm{loc}}$. Second, among Sobolev exponents for the frame, $2$ is selected by the change-of-variables law (Lemma~\ref{lem:toolbox}(b)): quasiconformal composition preserves precisely the Dirichlet class $W^{1,2}_{\mathrm{loc}}$. Write
\[
\Wcal \;:=\; W^{1,2}_{\mathrm{loc}}(\Omega) \,\cap\, L^\infty_{\mathrm{loc}}(\Omega).
\]

\begin{definition}[Measurable framed equation]\label{def:measurable}
A \emph{measurable framed Beltrami--Vekua equation} on $\Omega$ is an equation \eqref{eq:framed} whose data satisfy, for every compact $K \subset \Omega$ and constants $k_K < 1$, $\varepsilon_K > 0$ depending on $K$:
\[
\mu \in L^\infty(\Omega), \quad \operatorname*{ess\,sup}_K\, |\mu| \le k_K; \qquad
\Phi, \Psi \in \Wcal, \quad |\Phi|^2 - |\Psi|^2 \ge \varepsilon_K \ \text{a.e.\ on } K; \qquad
\fa, \fb, \ff \in L^2_{\mathrm{loc}}.
\]
\end{definition}

The data of Definition~\ref{def:measurable} feed the formulas of Definition~\ref{def:mass} without further ado:
\[
L\Phi,\; L\Psi \,\in\, L^2_{\mathrm{loc}}
\;\Longrightarrow\;
W_L(\Phi, \Psi) \,\in\, L^2_{\mathrm{loc}}
\;\Longrightarrow\;
N \;=\; \Phi\fb - \Psi\fa - W_L(\Phi,\Psi) \,\in\, L^2_{\mathrm{loc}},
\]
the first arrow because $\mu \in L^\infty$ multiplies $\Phi_z, \Psi_z \in L^2_{\mathrm{loc}}$, the second and third because $\Phi, \Psi \in L^\infty_{\mathrm{loc}}$; and since the weight $\bigl[(|\Phi|^2 - |\Psi|^2)^2(1 - |\mu|^2)\bigr]^{-1}$ is essentially bounded on $K$ by $\varepsilon_K^{-2}(1 - k_K^2)^{-1}$,
\[
\Theta \,\in\, L^1_{\mathrm{loc}}, \qquad \Mcal \;=\; \int_\Omega \Theta \;\in\; [0, \infty].
\]
No integrability hypothesis is imposed and none is needed: the mass density is automatically locally integrable, and only the total mass can diverge.

\begin{lemma}[Toolbox]\label{lem:toolbox}
\emph{(a)} $\Wcal$ is an algebra: for $f, g \in \Wcal$, $fg \in \Wcal$ with $\partial(fg) = f\,\partial g + g\,\partial f$ and $\bar\partial(fg) = f\,\bar\partial g + g\,\bar\partial f$ a.e.; moreover $\bar\partial\bar g = \overline{\partial g}$ a.e., so the intertwining \eqref{eq:intertwine} holds a.e.\ on $\Wcal$. If in addition $|f| \ge \varepsilon_K$ a.e.\ on compacts, then $1/f \in \Wcal$ with $\partial(1/f) = -f^{-2}\,\partial f$ a.e.

\emph{(b)} Let $F : \Omega \to \Omega'$ be an orientation-preserving homeomorphism in $W^{1,2}_{\mathrm{loc}}$ with $|F_{\bar z}| \le k_F\,|F_z|$ a.e.\ for some $k_F < 1$ (quasiconformal). Then $F$ is differentiable a.e.\ with $J = |F_z|^2 - |F_{\bar z}|^2 > 0$ a.e., satisfies Luzin's conditions (N) and (N$^{-1}$), and the area formula $\int_{\Omega'} g\; d\xi\,d\eta = \int_\Omega (g \circ F)\, J\; dx\,dy$ holds for measurable $g \ge 0$; the inverse $F^{-1}$ is quasiconformal; and composition with $F$ preserves $\Wcal$, with the chain rule
\[
(g \circ F)_z = (g_\zeta \circ F)\, F_z + (g_{\bar\zeta} \circ F)\, \overline{F_{\bar z}}, \qquad
(g \circ F)_{\bar z} = (g_\zeta \circ F)\, F_{\bar z} + (g_{\bar\zeta} \circ F)\, \overline{F_z}
\]
valid a.e.\ for $g \in \Wcal$. All of this is classical; see \cite{aim}.
\end{lemma}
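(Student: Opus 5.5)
Lemma~\ref{lem:toolbox} packages standard facts, so the plan is to reduce each clause to a classical statement about Sobolev functions or planar quasiconformal maps, adding only the bookkeeping needed for the complex-derivative notation.

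\emph{Part (a).} The algebra property follows from the product rule for $W^{1,1}_{\mathrm{loc}}$ functions. Let $f,g\in W^{1,2}_{\mathrm{loc}}\cap L^\infty_{\mathrm{loc}}$ and fix a compact $K$. Mollify to get $f_\epsilon,g_\epsilon\in C^\infty$ with $f_\epsilon\to f$ and $g_\epsilon\to g$ in $W^{1,2}(K)$, a.e., and with $\sup_\epsilon\|f_\epsilon\|_{L^\infty(K)}$ and $\sup_\epsilon\|g_\epsilon\|_{L^\infty(K)}$ bounded. Then $\partial(f_\epsilon g_\epsilon)=f_\epsilon\partial g_\epsilon+g_\epsilon\partial f_\epsilon$. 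The right-hand side converges in $L^1(K)$, and in fact in $L^2(K)$ by dominated convergence, because the factors are uniformly bounded and converge a.e. Passing to the limit in the weak formulation gives the Leibniz rule together with $fg\in W^{1,2}_{\mathrm{loc}}$, and boundedness of $fg$ is immediate. The identity $\bar\partial\bar g=\overline{\partial g}$ comes from conjugating the defining integration by parts against real test functions, using $\partial=\tfrac12(\partial_x-i\partial_y)$. The intertwining \eqref{eq:intertwine} then holds a.e., since $\mu$ enters only as an $L^\infty$ multiplier. For $1/f$, compose with a $C^1$ function $G$ that agrees with $\zeta\mapsto1/\zeta$ on $\{|\zeta|\ge\varepsilon_K\}$ and has bounded derivative. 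The Sobolev chain rule for $C^1$ maps with bounded gradient, applied to the real and imaginary parts, gives $\partial(1/f)=-f^{-2}\partial f$ a.e. on $K$.

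\emph{Part (b).} Here I would cite \cite{aim} for the standard planar facts. First, a $W^{1,2}_{\mathrm{loc}}$ homeomorphism is differentiable a.e. (Gehring--Lehto). Second, it satisfies condition (N) because $J\in L^1_{\mathrm{loc}}$ and distortion is bounded; condition (N$^{-1}$) and $J>0$ a.e. follow from the same argument applied to $F^{-1}$. Third, $F^{-1}$ is quasiconformal with the same constant, and the area formula holds under condition (N).

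For the invariance of $\Wcal$, boundedness is clear because $F$ is a homeomorphism and maps compact sets to compact sets. The substantive point is $g\circ F\in W^{1,2}_{\mathrm{loc}}$ with the stated chain rule. The main step is the pointwise bound $|D(g\circ F)|^2\le |Dg\circ F|^2|DF|^2\le K_F\,|Dg\circ F|^2J$, where $K_F=(1+k_F)/(1-k_F)$. The area formula converts this into $\int_K|D(g\circ F)|^2\le K_F\int_{F(K)}|Dg|^2$, which is the Dirichlet-energy invariance. I would prove the chain rule for smooth approximants $g_\epsilon$, where it is classical because $F$ is absolutely continuous on lines. The energy inequality shows $g_\epsilon\circ F\to g\circ F$ in $W^{1,2}_{\mathrm{loc}}$. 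Condition (N$^{-1}$) ensures that $Dg_\epsilon\circ F\to Dg\circ F$ a.e., since null sets pull back to null sets. This identifies the limit with the right-hand side of the displayed chain rule, written in Wirtinger form.

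I expect this last step to be the main obstacle. Making sure that $Dg\circ F$ is well defined a.e., independent of the representative of $Dg$, requires (N$^{-1}$) and not only (N). I would handle it exactly as in \cite{aim}.
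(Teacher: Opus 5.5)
Your proposal is correct and agrees with the paper, which offers no argument beyond the citation \cite{aim}; your mollification proof of (a) and your approximation-plus-energy-inequality proof of the chain rule in (b) supply details that the reference contains. Two small precisions: condition (N) comes from the $W^{1,2}_{\mathrm{loc}}$ (ACL) regularity of the homeomorphism, not from $J\in L^1_{\mathrm{loc}}$ and bounded distortion alone (the homeomorphism $(x,y)\mapsto(x+c(x),y)$, with $c$ the Cantor function, has $DF=I$ a.e.\ yet maps a null set onto a set of positive area), and the energy inequality controls only the gradients $D(g_\epsilon\circ F)$, so the $L^2_{\mathrm{loc}}$ convergence of $g_\epsilon\circ F$ itself must come separately from (N$^{-1}$), $g\in L^\infty_{\mathrm{loc}}$ and dominated convergence.
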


\begin{theorem}[Mass at measurable regularity]\label{thm:measurable-mass}
Let \eqref{eq:framed} be measurable framed. Then $\Theta \in L^1_{\mathrm{loc}}$, and:
\begin{itemize}
\item[(i)] under every substitution $w = \varphi w' + \psi\bar w'$ with $\varphi, \psi \in \Wcal$ and $|\varphi|^2 - |\psi|^2 \ge \varepsilon_K$ a.e.\ on compacts, the laws \eqref{eq:frame-law}--\eqref{eq:b-law} hold a.e., the measurable framed class is closed, and $N' = (|\varphi|^2 - |\psi|^2)\,N$ a.e.; hence $\Theta' = \Theta$ a.e.;
\item[(ii)] under the scalings \eqref{eq:row-scaling} with $c \in \Wcal$, $|c| \ge \varepsilon_K$ a.e.\ on compacts, the class is closed and $N' = c^2 N$ a.e.;
\item[(iii)] at the trivial frame $(\Phi, \Psi) = (1, 0)$, $\Theta$ equals the density $|\fb|^2(1 - |\mu|^2)^{-1}\,dx\,dy$ of \cite{mass}, now defined for $\fb \in L^2_{\mathrm{loc}}$ and $\mu$ merely measurable and locally elliptic;
\item[(iv)] under every orientation-preserving quasiconformal homeomorphism $F$,
the law \eqref{eq:diffeo-law} holds a.e.; the raw pullback, after the row scaling
\eqref{eq:row-scaling} with $c = \rho^{-1}$ --- its \emph{canonical pullback} ---
is measurable framed, and its density satisfies $\tilde\Theta = F^*\Theta$; in
particular $\Mcal$ is unchanged.
\end{itemize}
\end{theorem}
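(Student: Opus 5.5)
The plan is to re-run the smooth proofs of Propositions~\ref{prop:substitution}, \ref{prop:diffeo} and Theorem~\ref{thm:framed-mass} verbatim, replacing each classical calculus step by its a.e.\ counterpart from Lemma~\ref{lem:toolbox}. Those proofs use only four ingredients: the Leibniz rule for $L$ and $M$, the intertwining \eqref{eq:intertwine}, the chain rule for a change of variables, and pointwise algebra. Since $\mu$, $\fa$, $\fb$, $\ff$ are never differentiated, every identity there is an algebraic identity among the values of the data and the first derivatives of the frame and substitution. It therefore holds at every point where these derivatives exist and obey the product and chain rules. By Lemma~\ref{lem:toolbox} this is a set of full measure. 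Local integrability of $\Theta$ was already shown in the display preceding the lemma.

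For (i), I would first check closure. By Lemma~\ref{lem:toolbox}(a), $\Phi' = \Phi\varphi + \Psi\bar\psi$ and $\Psi'$ lie in $\Wcal$, and \eqref{eq:det-multiplicative}, being pointwise algebra, gives $|\Phi'|^2 - |\Psi'|^2 \ge \varepsilon_K\varepsilon'_K$ a.e.\ on $K$. Next, $\fa'$ and $\fb'$ from \eqref{eq:a-law}--\eqref{eq:b-law} lie in $L^2_{\mathrm{loc}}$, because each term is a product of an $L^\infty_{\mathrm{loc}}$ factor with either an $L^2_{\mathrm{loc}}$ derivative or $L^2_{\mathrm{loc}}$ data, with $\mu \in L^\infty$. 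The substitution computation needs the solution $w$ to make sense, and for the law on data only the formal collection of coefficients matters. I would then run the Wronskian computation of Theorem~\ref{thm:framed-mass}(i) a.e., using the Leibniz rule and $L\bar\varphi = \overline{M\varphi}$ a.e. This gives $N' = DN$ a.e., and hence $\Theta' = \Theta$ a.e. Part (ii) is the same with $(\varphi,\psi) = $ scalar multiplication by $c$: $c\Phi, c\Psi \in \Wcal$, the determinant scales by $|c|^2 \ge \varepsilon_K^2$, and $W_L(c\Phi,c\Psi) = c^2 W_L(\Phi,\Psi)$ a.e.\ by Leibniz. Part (iii) is immediate, since $L1 = 0$.

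Part (iv) is the main obstacle, because $\rho$ involves $F_z$, which is only in $L^2_{\mathrm{loc}}$, and the raw pullback frame is not in $\Wcal$. This is why the statement uses the canonical pullback. I would proceed in four steps.
\begin{itemize}
\item[(1)] Show $\tilde\mu$ is measurable and locally elliptic. Formula \eqref{eq:mu-tilde-det} holds a.e.\ with $|F_{\bar z}| \le k_F|F_z|$, and together with $|\mu\circ F| \le k_{F(K)}$ it gives the uniform bound $|\tilde\mu| \le (k_F + k_{F(K)})/(1 + k_F k_{F(K)}) < 1$ on $K$. Measurability of $\mu\circ F$ uses condition (N$^{-1}$).
\item[(2)] Show the frame $(\Phi\circ F, \Psi\circ F)$ lies in $\Wcal$ with determinant bounded below by $\varepsilon_{F(K)}$ a.e. This uses Lemma~\ref{lem:toolbox}(b) and condition (N$^{-1}$), so that a.e.\ bounds transfer.
\item[(3)] Show the lower-order data $\rho^{-1}(\fa,\fb,\ff)\circ F$ lie in $L^2_{\mathrm{loc}}$. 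This is the delicate integrability point. From \eqref{eq:mu-tilde-det}, $|\rho|^{-2} = J(1-|\tilde\mu|^2)/(1-|\mu\circ F|^2) \le J/(1-k^2)$. Then, by the area formula,
\[
\int_K |\fb\circ F|^2|\rho|^{-2} \le (1-k^2)^{-1}\int_{F(K)}|\fb|^2 < \infty.
\]
\item[(4)] Verify that the chain-rule identities $\kappa_1 = -(\mu\circ F)/\rho$ and $\kappa_2 = 1/\rho$ hold a.e. These give $W_{\tilde L}(\Phi\circ F,\Psi\circ F) = (W_L\circ F)/\rho$ and $\tilde N = (N\circ F)/\rho$ a.e. The final assembly $\tilde\Theta = (\Theta\text{-density}\circ F)\,J$ is then pointwise algebra.
\end{itemize}
Integrating with the area formula for nonnegative measurable functions gives invariance of $\Mcal$, including the case $\Mcal = \infty$. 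The one subtlety I would check carefully is the composition of the weak derivatives of $\Phi$ with $F$ in step (4). The functions $\Phi_\zeta\circ F$ must be well defined a.e., which holds because the derivatives are defined off a null set and (N$^{-1}$) pulls that null set back to a null set.
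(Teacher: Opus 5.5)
Your proposal is correct and follows essentially the same route as the paper: you re-run every smooth identity a.e.\ via Lemma~\ref{lem:toolbox}, with the real checks being class closure in (i)--(ii) and, in (iv), the local ellipticity of $\tilde\mu$, the $\Wcal$ frame of the canonical pullback, and $L^2_{\mathrm{loc}}$ lower-order data; your M\"obius bound $(k_F+k)/(1+k_Fk)$ is equivalent to the paper's bound, and your estimate $|\rho|^{-2}\le J/(1-k^2)$, read off \eqref{eq:mu-tilde-det}, is a slightly cleaner substitute for the paper's $|\rho|^{-1}\le |F_z|/(1-k\,k_F)$. One small wording slip: the scaling in (ii) is the row operation $E\mapsto cE$, not the substitution $w=cw'$ (which would send the frame to $(c\Phi,\bar c\Psi)$), but your direct verification handles it correctly.
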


\begin{proof}
Every identity in the proofs of Propositions~\ref{prop:substitution} and \ref{prop:diffeo} and of Theorem~\ref{thm:framed-mass} is a finite composition of pointwise algebra, the Leibniz rule, the intertwining \eqref{eq:intertwine}, and --- in the change of variables --- the chain rule and the area formula. By Lemma~\ref{lem:toolbox} each ingredient holds almost everywhere on the stated classes, so each identity holds almost everywhere; no derivative of $\mu$ or of the lower-order data appears in any of them. Three points require checking.

\emph{Class closure in (i) and (ii).} The new frame \eqref{eq:frame-law} lies in $\Wcal$ by the algebra property and is uniformly framed by \eqref{eq:det-multiplicative}: $|\Phi'|^2 - |\Psi'|^2 \ge \varepsilon_K\,\varepsilon'_K$ a.e.\ on $K$. The new lower-order data \eqref{eq:a-law}--\eqref{eq:b-law} are sums of products of an $L^\infty_{\mathrm{loc}}$ and an $L^2_{\mathrm{loc}}$ function, hence lie in $L^2_{\mathrm{loc}}$. The same applies to the scaled data in (ii).

\emph{Local ellipticity of $\tilde\mu$ in (iv).} On a compact $K$, with $k := \operatorname*{ess\,sup}_{F(K)} |\mu|$ and $|F_{\bar z}| \le k_F\,|F_z|$ a.e.,
\[
|\rho|^2 J \;=\; \frac{\bigl|F_z + (\mu\circ F)\,\overline{F_{\bar z}}\bigr|^2}{J}
\;\le\; \frac{(1 + k\,k_F)^2\,|F_z|^2}{(1 - k_F^2)\,|F_z|^2},
\]
so by \eqref{eq:mu-tilde-det}
\[
1 - |\tilde\mu|^2 \;=\; \frac{1 - |\mu\circ F|^2}{|\rho|^2\,J}
\;\ge\; \frac{(1 - k^2)(1 - k_F^2)}{(1 + k\,k_F)^2} \;>\; 0
\qquad \text{a.e.\ on } K,
\]
which is the required local ellipticity of $\tilde\mu$.

\emph{The weight $\rho$ in (iv).} The raw pulled-back frame
$\rho\,(\Phi\circ F, \Psi\circ F)$ carries the merely measurable factor $\rho$ and
so leaves $\Wcal$: the raw pullback is \emph{not} itself measurable framed, and its
mass density is not given directly by Definition~\ref{def:mass}. We therefore fix the
\emph{canonical pullback} to be the row scaling \eqref{eq:row-scaling} of the raw
pullback by $c = \rho^{-1}$ --- an admissible operation on the equation for any
nonzero measurable $c$ --- under which the weight $\rho$ cancels and the frame
returns to $(\Phi\circ F, \Psi\circ F)$, and read $\Theta$ off this representative.
(No appeal to independence of the scaling is made here; that the value does not
depend on the choice is recorded separately, after the theorem, as
Proposition~\ref{prop:admissible}.) The canonical pullback is measurable framed: its
frame $(\Phi\circ F, \Psi\circ F)$ lies in $\Wcal$ by Lemma~\ref{lem:toolbox}(b) and
is uniformly framed, since $F$ maps compacts to compacts and, by (N) and (N$^{-1}$),
null sets to null sets in both directions; and its lower-order data
$\rho^{-1}(\fa, \fb, \ff)\circ F$ are locally square-integrable, by
$|\rho|^{-1} \le |F_z|\,(1 - k\,k_F)^{-1}$ a.e.\ and the area formula:
\[
\int_K \bigl|\rho^{-1}\,(\fa\circ F)\bigr|^2
\;\le\; \frac{1}{(1 - k\,k_F)^2}\int_K |F_z|^2\,\bigl(|\fa|^2\circ F\bigr)
\;\le\; \frac{(1 - k_F^2)^{-1}}{(1 - k\,k_F)^2}\int_{F(K)} |\fa|^2 \;<\; \infty.
\]
The computation of $\tilde\Theta$ in the proof of Theorem~\ref{thm:framed-mass}(iv),
carried out on the canonical pullback, then proceeds verbatim, a.e., yielding
$\tilde\Theta = F^*\Theta$; the area formula converts this into the equality of
total masses. This is well defined: if two scalings of the same equation are measurable framed,
their masses coincide. In particular the canonical pullback of
Theorem~\ref{thm:measurable-mass}(iv) is one such representative, so the mass it
computes is the mass of the equation in this sense.
\end{proof}

\begin{proposition}[Well-definedness on scaling classes]\label{prop:admissible}
Call a framed equation \emph{admissible} if some scaling \eqref{eq:row-scaling} of it is measurable framed in the sense of Definition~\ref{def:measurable}, and define its mass to be the mass of such a representative. This is well defined: if two scalings of the same equation are measurable framed, their masses coincide.
\end{proposition}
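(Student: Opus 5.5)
Suppose two scalings $c_1 E$ and $c_2 E$ of the same equation are both measurable framed, with $c_1, c_2$ merely measurable and nonvanishing a.e. Replacing the second by a scaling of the first, we may take the first representative as the base equation, with data $(\Phi,\Psi,\fa,\fb,\ff)$ measurable framed. The second is then its scaling by $c := c_2/c_1$, with data $c\,(\Phi,\Psi,\fa,\fb,\ff)$ and the same $\mu$. The plan is to show that $c$ is automatically in $\Wcal$ and locally bounded below. Theorem~\ref{thm:measurable-mass}(ii) then gives $N' = c^2 N$ a.e., and the $|c|^4$ factors cancel in $\Theta$ exactly as in Theorem~\ref{thm:framed-mass}(ii).

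\emph{Step 1: regularity of $c$.} Both frames $(\Phi,\Psi)$ and $(c\Phi, c\Psi)$ lie in $\Wcal$ and are uniformly framed on compacts. Since $|\Phi|^2 \ge |\Phi|^2 - |\Psi|^2 \ge \varepsilon_K$ a.e.\ on $K$, Lemma~\ref{lem:toolbox}(a) gives $1/\Phi \in \Wcal$. Hence $c = (c\Phi)\cdot(1/\Phi) \in \Wcal$ by the algebra property. For the lower bound, $|c|^2(|\Phi|^2 - |\Psi|^2) \ge \varepsilon'_K$ together with $|\Phi|^2 - |\Psi|^2 \le \|\Phi\|_{L^\infty(K)}^2$ gives $|c|^2 \ge \varepsilon'_K/\|\Phi\|^2_{L^\infty(K)} > 0$ a.e.\ on $K$. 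So $c$ is an admissible scaling in the sense of Theorem~\ref{thm:measurable-mass}(ii).

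\emph{Step 2: transformation of $N$.} We have $c\Phi\cdot c\fb - c\Psi\cdot c\fa = c^2(\Phi\fb - \Psi\fa)$ pointwise. By the a.e.\ Leibniz rule of Lemma~\ref{lem:toolbox}(a), applied to $L = \bar\partial - \mu\partial$ with $\mu$ only a bounded multiplier,
\[
W_L(c\Phi, c\Psi) = c^2 W_L(\Phi,\Psi) + c\,(Lc)(\Phi\Psi - \Psi\Phi) = c^2 W_L(\Phi,\Psi)
\]
a.e. Therefore $N' = c^2 N$. The frame determinant scales by $|c|^2$, and $\mu$ is unchanged, so $\Theta' = \Theta$ a.e. and the masses agree.

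\emph{Main obstacle.} The only delicate point is Step 1. A priori $c$ is merely measurable, and the Wronskian of $(c\Phi, c\Psi)$ cannot be expanded by Leibniz unless $c$ itself has weak derivatives. The key is to extract the regularity of $c$ from the frame by division, using the uniform lower bound on $|\Phi|$ inherited from the frame condition, together with the quotient rule in Lemma~\ref{lem:toolbox}(a).

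A subsidiary check is that $1/\Phi \in \Wcal$ and the product $c = (c\Phi)(1/\Phi)$ are taken locally. This is harmless because every hypothesis in Definition~\ref{def:measurable} is local on compacts, and $\Theta$ is defined pointwise a.e.
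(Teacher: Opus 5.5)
Your proposal is correct and is essentially the paper's proof. Write the second representative as the scaling of the first by $c=\Phi'/\Phi$; get $c\in\Wcal$ from $1/\Phi\in\Wcal$ (Lemma~\ref{lem:toolbox}(a), via $|\Phi|^2\ge|\Phi|^2-|\Psi|^2\ge\varepsilon_K$) and the algebra property; bound $|c|^2$ below on $K$ by $\varepsilon'_K\|\Phi\|_{L^\infty(K)}^{-2}$; then invoke Theorem~\ref{thm:measurable-mass}(ii). The only differences are immaterial: you route the lower bound through $|\Phi|^2-|\Psi|^2\le\|\Phi\|^2_{L^\infty(K)}$ rather than the paper's $|\Phi'|^2\ge\varepsilon'_K$, and you re-derive the Wronskian scaling $W_L(c\Phi,c\Psi)=c^2W_L(\Phi,\Psi)$ explicitly instead of citing it.
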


\begin{proof}
The two representatives differ by the scaling $c = \Phi'/\Phi$. Since $|\Phi|^2 \ge |\Phi|^2 - |\Psi|^2 \ge \varepsilon_K$ a.e., Lemma~\ref{lem:toolbox}(a) gives $1/\Phi \in \Wcal$, hence $c \in \Wcal$; and $|c|^2 = |\Phi'|^2/|\Phi|^2 \ge \varepsilon'_K\,\|\Phi\|_{L^\infty(K)}^{-2} > 0$ a.e.\ on $K$. Theorem~\ref{thm:measurable-mass}(ii) applies, and the masses agree.
\end{proof}

\begin{corollary}[Conversion at measurable regularity]\label{cor:measurable-conversion}
Let \eqref{eq:skeleton} be elliptic with, on every compact $K \subset \Omega$
and for constants $\varepsilon_K > 0$, $k_K < 1$ depending on $K$:
\[
\delta,\; (a_{21}-a_{12}) \in \Wcal, \quad \delta \ge \varepsilon_K \ \text{a.e.\ on } K;
\qquad
\operatorname*{ess\,sup}_K |\mu| \le k_K;
\qquad
a_{13}, a_{14}, a_{23}, a_{24}, f_1, f_2 \in L^2_{\mathrm{loc}}.
\]
Then the conversion $E + \mu\bar E$ of Theorem~\ref{thm:conversion} is measurable
framed. The frame is independent of $\mu$ and lies in $\Wcal$ by
Lemma~\ref{lem:frame-closed-form}: $\operatorname{Re}\Phi = 1+\sqrt\delta \in \Wcal$
since $\delta \in \Wcal$ is bounded below, and
$\operatorname{Im}\Phi = \tfrac12(a_{21}-a_{12}) \in \Wcal$; the frame determinant
is $|\Phi|^2 - |\Psi|^2 = 4\sqrt\delta \ge 4\sqrt{\varepsilon_K}$ a.e.; $\mu$ is
measurable and locally elliptic; and $\fa, \fb, \ff \in L^2_{\mathrm{loc}}$, since
$\mu \in L^\infty$ multiplies the $L^2_{\mathrm{loc}}$ lower-order data
\eqref{eq:conversion-data}. Consequently $\Mcal$ is defined for such systems and
is unchanged by every recombination of unknowns with $\Wcal$ data, every
recombination of equations, and every orientation-preserving quasiconformal
change of variables.

The hypothesis is strictly weaker than $a_{ij} \in \Wcal$: by
Lemma~\ref{lem:frame-closed-form} the frame sees the principal coefficients only
through $\delta$ and $a_{21}-a_{12}$, so the ratios $\alpha = a_{22}/a_{11}$,
$\beta = -(a_{12}+a_{21})/a_{11}$ --- equivalently $\mu$ --- may be merely
measurable while the frame remains in $\Wcal$ (Remark~\ref{rem:gap}).
\end{corollary}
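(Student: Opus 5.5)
The plan is to verify the four clauses of Definition~\ref{def:measurable} for the data produced by Theorem~\ref{thm:conversion}, and then read off the invariance claims from Theorem~\ref{thm:measurable-mass} and Proposition~\ref{prop:admissible}. The first step checks that Theorem~\ref{thm:conversion} and Lemma~\ref{lem:frame-closed-form} remain valid almost everywhere. Their proofs are pure pointwise algebra: the slot bundling, the root $\lambda$, the key identity \eqref{eq:key-identity}, and the frame determinant \eqref{eq:frame-det}. None of them differentiates anything, and ellipticity $a_{11}>0$, $\delta>0$ holds a.e. So the framed slot structure, the identities $\Psi=\Phi-2$ and $\Phi=(1+\sqrt\delta)+\tfrac i2(a_{21}-a_{12})$, and $|\Phi|^2-|\Psi|^2=2a_{11}\sqrt\Delta=4\sqrt\delta$ all hold a.e. Remark~\ref{rem:invertible} likewise shows that the solution set is preserved, since $1-|\mu|^2\ge 1-k_K^2>0$ a.e. on $K$.

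The second step checks the frame class, which is the only point needing more than algebra. I will show $\sqrt\delta\in\Wcal$ from $\delta\in\Wcal$ with $\delta\ge\varepsilon_K$ on $K$. The function $t\mapsto\sqrt t$ is smooth with bounded derivative on $[\varepsilon_K,\infty)$. Since $\delta\in L^\infty_{\mathrm{loc}}$, its values on $K$ lie in a compact subinterval $[\varepsilon_K,\|\delta\|_{L^\infty(K)}]$. The Sobolev chain rule for a $C^1$ function that is Lipschitz on the range (modify it outside the range) then gives $\sqrt\delta\in W^{1,2}_{\mathrm{loc}}$ with $\partial\sqrt\delta=\partial\delta/(2\sqrt\delta)$, and $\sqrt\delta$ is locally bounded. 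Together with $a_{21}-a_{12}\in\Wcal$, this gives $\Phi\in\Wcal$, and hence $\Psi=\Phi-2\in\Wcal$. The uniform framing $|\Phi|^2-|\Psi|^2=4\sqrt\delta\ge 4\sqrt{\varepsilon_K}$ holds a.e. on $K$. The local ellipticity of $\mu$ is a hypothesis, and $\mu$ is measurable since it is an algebraic function of the measurable $a_{ij}$.

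The third step treats the lower-order data. The quantities $a,b,f$ of \eqref{eq:rs} are real-linear in $a_{13},a_{14},a_{23},a_{24},f_1,f_2$, so they lie in $L^2_{\mathrm{loc}}$. Since $|\mu|<1$ a.e., the data $\fa=a+\mu\bar b$, $\fb=b+\mu\bar a$, $\ff=f+\mu\bar f$ are also in $L^2_{\mathrm{loc}}$. This completes the verification that the conversion is measurable framed.

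The final step derives the invariance statements. Invariance under recombinations of unknowns with $\Wcal$ data follows from Theorem~\ref{thm:measurable-mass}(i). Invariance under quasiconformal changes of variables follows from Theorem~\ref{thm:measurable-mass}(iv), applied through the canonical pullback. For recombinations of equations, I follow the proof of Corollary~\ref{cor:system-invariant}. Two converted forms of the same system share $\mu$, and the framed slot positions determine the frame up to a common factor, so the forms differ by a scaling. Proposition~\ref{prop:admissible} then shows the masses agree, without requiring a priori regularity of that factor. The closing remark that the hypothesis is weaker than $a_{ij}\in\Wcal$ needs only an example, to be deferred to Remark~\ref{rem:gap}.

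The main obstacle I expect is this compatibility of system-level operations with the conversion. A pulled-back or recombined system must convert to the transformed framed form up to a scaling. That scaling is only measurable, since it involves $\rho$, so I will route it through Proposition~\ref{prop:admissible} rather than through Theorem~\ref{thm:measurable-mass}(ii) directly.
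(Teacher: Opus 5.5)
Your proposal is correct and follows the paper's own argument. You verify the clauses of Definition~\ref{def:measurable} through Lemma~\ref{lem:frame-closed-form}, the lower bound on $\delta$ and $\mu\in L^\infty$, then read off the invariances from Theorem~\ref{thm:measurable-mass} and the scaling argument of Corollary~\ref{cor:system-invariant}; your Sobolev chain rule for $\sqrt\delta$ and your appeal to Proposition~\ref{prop:admissible} only make explicit what the paper leaves implicit. As a small bonus on the obstacle you flag: once the converted form of a pulled-back system is known to be a scaling $c$ of the raw pullback, the normalization $\Phi-\Psi=2$ of every converted frame forces $c\rho=1$, so that converted form is exactly the canonical pullback and the merely measurable scaling causes no difficulty.
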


\begin{remark}[The frame can be smoother than $\mu$]\label{rem:gap}
The regularity gap permitted by Definition~\ref{def:measurable} --- frame in
$\Wcal$, $\mu$ merely measurable --- is realized by a converted system, with no
change of variables. Fix any measurable $\mu : \Omega \to \D$ with
$\operatorname*{ess\,sup}_K |\mu| \le k_K < 1$, recover
$\lambda = i(1+\mu)/(1-\mu) = \ell_1 + i\ell_2$ pointwise (so $\ell_2$ is bounded
above and below on $K$), and set, for a smooth constant $\delta_0 > 0$,
\[
a_{11} = \frac{\sqrt{\delta_0}}{\ell_2}, \qquad
a_{22} = \alpha\, a_{11}, \qquad
a_{12} = a_{21} = -\tfrac{\beta}{2}\,a_{11},
\qquad \alpha = |\lambda|^2,\ \ \beta = -2\ell_1.
\]
These are merely measurable elliptic skeleton coefficients with constant
discriminant $\delta = a_{11}^2\,\Delta/4 = \delta_0$ and $a_{21}-a_{12}=0$, so by
Lemma~\ref{lem:frame-closed-form} the converted frame is the constant
\[
\Phi \equiv 1 + \sqrt{\delta_0}, \qquad \Psi \equiv \sqrt{\delta_0} - 1,
\]
while the Beltrami coefficient is the prescribed $\mu$, as rough as one may choose.
The frame being constant, $W_L(\Phi, \Psi) \equiv 0$, so $N = \Phi\,\fb - \Psi\,\fa$
and the mass density of Definition~\ref{def:mass} is well defined with $\mu$ only
measurable. This is the algebraic counterpart of the quasiconformal mechanism of
Theorem~\ref{thm:measurable-mass}(iv): there the roughness is carried by the gauge
weight $\rho$, here by the scale $a_{11} \sim \ell_2^{-1}$, calibrated against
$\sqrt\Delta$ so that $a_{11}\sqrt\Delta = 2\sqrt\delta$ --- the only combination
the frame reads --- stays smooth. Either way the roughness lies in a direction
fixed by the algebra, where $\mu$ cannot see it.
\end{remark}

\begin{corollary}[Measurable uniformization]\label{cor:uniformization}
Let \eqref{eq:framed} be measurable framed with the global bound $\|\mu\|_{L^\infty(\Omega)} < 1$. Then there is an orientation-preserving quasiconformal homeomorphism under which the equation pulls back, after the scaling of Theorem~\ref{thm:measurable-mass}(iv), to a measurable framed equation over $\tilde\mu = 0$ --- a framed \emph{Vekua} equation --- of equal mass.
\end{corollary}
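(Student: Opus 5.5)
The plan is to choose the quasiconformal map $F$ so that the transformed coefficient $\tilde\mu$ of \eqref{eq:diffeo-law} vanishes, and then invoke Theorem~\ref{thm:measurable-mass}(iv). The formula
\[
\tilde\mu = \frac{F_{\bar z} + (\mu\circ F)\,\overline{F_z}}{F_z + (\mu\circ F)\,\overline{F_{\bar z}}}
\]
shows that $\tilde\mu = 0$ a.e.\ exactly when $F_{\bar z} = -(\mu\circ F)\,\overline{F_z}$. This is a Beltrami equation for $F$ whose coefficient is composed with $F$, which is awkward to solve directly. It is easier to work with the inverse $G := F^{-1} : \Omega' \to \Omega$, where $\Omega'$ is the original domain (the one carrying $\mu$) and $\Omega$ is the new domain. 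Geometrically, $Lw = 0$ means that $w$ is $\mu$-quasiregular, so $w$ should become holomorphic after composing with the inverse of a solution of the Beltrami equation for $\mu$. The natural candidate for $G$ therefore solves a Beltrami equation with coefficient $\mu$ or $-\bar\mu$, possibly conjugated, and I would pin down the sign by a direct computation with the inverse chain rule of Lemma~\ref{lem:toolbox}(b).

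Concretely, differentiating the identity $F\circ G = \mathrm{id}$ and using the chain rule gives
\[
F_{\bar z}\circ G = -\,G_{\bar\zeta}/J_G, \qquad F_z\circ G = \overline{G_\zeta}/J_G .
\]
Substituting these into $F_{\bar z} + \mu\,\overline{F_z} = 0$ reduces the condition to $G_{\bar\zeta} = \mu\,G_\zeta$ a.e.\ on $\Omega'$. In other words, $G$ must solve the Beltrami equation with coefficient $\mu$ itself, so the condition is equivalent to $LG = 0$. I would double-check this sign against the direct definition $(Lw)\circ F = \rho\,\tilde L h$ from Proposition~\ref{prop:diffeo}. That equation gives a consistency check: with $w = G$ we have $h = G\circ F = \mathrm{id}$, hence $\tilde L h = -\tilde\mu$, and so $LG = 0$ forces $\tilde\mu = 0$ wherever $\rho \neq 0$.

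To produce $G$, extend $\mu$ by zero to all of $\C$, keeping $\|\mu\|_\infty < 1$, which is exactly where the global bound is used. The measurable Riemann mapping theorem \cite{aim} then supplies a quasiconformal homeomorphism $G_0 : \C \to \C$ with $\bar\partial G_0 = \mu\,\partial G_0$ a.e. Set $G := G_0|_{\Omega'}$ and $F := G^{-1}$, defined on $\Omega := G(\Omega')$. By Lemma~\ref{lem:toolbox}(b), $F$ is an orientation-preserving quasiconformal homeomorphism, and the chain-rule computation above holds a.e. The Luzin conditions (N) and (N$^{-1}$) let us transfer statements that hold a.e.\ on $\Omega'$ to statements that hold a.e.\ on $\Omega$, so $\tilde\mu = 0$ a.e. Theorem~\ref{thm:measurable-mass}(iv) then shows that the canonical pullback is measurable framed, with density $\tilde\Theta = F^*\Theta$, and the area formula gives equal total mass.

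I expect the main obstacle to be the rigorous justification of the inverse-derivative formulas at Sobolev regularity, that is, differentiating $F\circ G = \mathrm{id}$ a.e. I would argue this through the a.e.\ differentiability of quasiconformal maps together with (N$^{-1}$): at almost every point, both $G$ and $F$ are differentiable at the relevant points with nonzero Jacobian, so the differentials are inverse linear maps there. The remaining points form a null set, and (N) and (N$^{-1}$) ensure it stays null under either map. The rest of the argument is bookkeeping already contained in Theorem~\ref{thm:measurable-mass}.
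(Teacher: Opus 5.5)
Your proposal is correct and follows the paper's proof. You extend $\mu$ by zero, take the measurable Riemann map $G$ with $G_{\bar\zeta}=\mu\,G_\zeta$, set $F=G^{-1}$, verify that the numerator $F_{\bar z}+(\mu\circ F)\,\overline{F_z}$ of $\tilde\mu$ vanishes a.e., and conclude by Theorem~\ref{thm:measurable-mass}(iv). The only difference is cosmetic: the paper differentiates $G\circ F=\mathrm{id}$ and divides by $G_\zeta\circ F\neq 0$ (essentially your ``consistency check''), while your main computation inverts the differential of $G$.
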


\begin{proof}
Extend $\mu$ by $0$ to $\C$ and let $G : \C \to \C$ be the quasiconformal homeomorphism with $G_{\bar\zeta} = \mu\,G_\zeta$ a.e.\ given by the measurable Riemann mapping theorem \cite{bojarski, aim}; set $F := G^{-1}$, restricted to $G(\Omega)$ and quasiconformal by Lemma~\ref{lem:toolbox}(b). Differentiating $G \circ F = \mathrm{id}$ a.e.\ by the chain rule of Lemma~\ref{lem:toolbox}(b),
\[
0 \;=\; (G \circ F)_{\bar z}
\;=\; (G_\zeta \circ F)\,F_{\bar z} + (G_{\bar\zeta} \circ F)\,\overline{F_z}
\;=\; (G_\zeta \circ F)\,\bigl[\,F_{\bar z} + (\mu \circ F)\,\overline{F_z}\,\bigr],
\]
and $G_\zeta \neq 0$ a.e., so $F_{\bar z} + (\mu \circ F)\,\overline{F_z} = 0$ a.e.\ --- which is precisely the numerator of $\tilde\mu$ in \eqref{eq:diffeo-law}. Hence $\tilde\mu = 0$, and Theorem~\ref{thm:measurable-mass}(iv) gives the equality of masses.
\end{proof}

\begin{remark}\label{rem:smallest-category}
Corollary~\ref{cor:uniformization} is not available within the $C^1$ category of Sections~\ref{sec:substitution}--\ref{sec:mass}: the uniformizing map of a non-smooth --- or even merely continuous --- $\mu$ is in general only $W^{1,2}_{\mathrm{loc}}$, so the smooth theory cannot transform its own Beltrami coefficient away without leaving its category. The measurable class is the natural extension closed under its own uniformization, and within it every mass is the mass of a framed Vekua equation. This also locates the construction relative to the pipeline of \cite{mass}: the pipeline differentiates the moving generator of the variable elliptic structure \cite{ves}, and its intermediate objects require classical derivatives of the principal coefficients; here the generator is never instantiated, the only weak derivative taken falls on the frame, and the invariant accordingly survives in the class --- Bojarski's \cite{bojarski} --- where the Beltrami coefficient is merely measurable.
\end{remark}

\begin{remark}[Comparison with Vekua's reduction]\label{rem:vekua}
The single derivative of the present theory falls on the same object as the
single derivative of the classical reduction of \cite[\S7]{vekua}. There the
skeleton \eqref{eq:skeleton} is carried to the Cauchy--Riemann canonical form
by solving a Beltrami equation for a change of coordinates and then applying
the unknown-substitution $U = \sqrt{\delta}\,u$, $V = v - \tfrac12(a_{12}-a_{21})u$,
whose differentiation produces the zeroth-order coefficients of the canonical
form,
\[
a_* = a_1' - \partial_\xi\sqrt{\delta} - \tfrac12\,\partial_\eta(a_{12}-a_{21}),
\qquad
c_* = a_2' - \partial_\eta\sqrt{\delta} + \tfrac12\,\partial_\xi(a_{12}-a_{21}),
\]
in which $\delta$ is Vekua's discriminant $\Delta$ of \cite[(7.6)]{vekua}. The two
functions differentiated, $\sqrt{\delta}$ and $a_{12}-a_{21}$, are by
Lemma~\ref{lem:frame-closed-form} the real and imaginary parts of the frame,
$\operatorname{Re}\Phi = 1+\sqrt{\delta}$, $\operatorname{Im}\Phi = -\tfrac12(a_{12}-a_{21})$;
the substitution is, in the present language, a straightening
(Proposition~\ref{prop:straighten}), recombining the unknown by the frame data;
and the derivative terms in $a_*, c_*$ are precisely the substitution-contamination
\eqref{eq:contamination}. Forming no invariant, \cite{vekua} leaves this
contamination in the reduced equation as its lower-order coefficients, whereas
the numerator field $N = \Phi\,\fb - \Psi\,\fa - W_L(\Phi,\Psi)$ is the combination
in which the same term cancels against the $L$-Wronskian
(Remark~\ref{rem:consistency}). The two reductions spend one derivative on one
object --- the frame; one displays it as a coefficient, the other absorbs it
into the invariant.

The regularity costs then separate. The reduction of \cite[\S7]{vekua} requires
the principal coefficients in $D_{m+1,p}$ and the lower-order data in $D_{m,p}$
with $p > 2$, and its coordinate change is the solution $\zeta \in D_{m+2,p}$ of
the Beltrami equation, available only for $q \in D_{m+1,p}$: the conformal datum
$q$ must be once differentiable and a partial differential equation must be
solved. The present reduction differentiates nothing in the conversion
(Theorem~\ref{thm:conversion}) and solves no equation; the lone derivative, that
of $W_L$, asks only for the frame in $\Wcal$ --- equivalently $\sqrt{\delta}$ and
$a_{12}-a_{21}$ in $W^{1,2}_{\mathrm{loc}}$ --- while the Beltrami coefficient $\mu$,
which carries the conformal structure that \cite{vekua} must integrate away,
remains merely measurable (Corollary~\ref{cor:measurable-conversion},
Remark~\ref{rem:gap}). The classical reduction's extra regularity does not purchase the normalization
of the principal part to $\partial_{\bar\zeta}$ itself --- that is available far
more cheaply, by straightening before uniformizing (Remark~\ref{rem:reorder}) ---
but only its classical, $C^1$-chart form; and the normalization, however
performed, is the step under which the mass becomes invisible.
\end{remark}

\begin{remark}[Comparison with Bojarski's reduction]\label{rem:bojarski}
The conversion of Section~\ref{sec:conversion} and Bojarski's complex reduction
\cite[\S2.1]{bojarski} begin from one and the same object and diverge only in the
combination of $E$ and $\bar E$ they form. Bojarski's complex equation
\cite[(2.4)]{bojarski} is the slot form \eqref{eq:slot-form}: his coefficients
$\lambda,\mu$ --- not those of the present paper --- are, by \eqref{eq:rs},
$1+\lambda = s$ and $\mu = r$, so the four-slot residual is shared verbatim. From
it, \cite[(2.6)]{bojarski} eliminates $\overline{w_{\bar z}}$ between $E$ and
$\bar E$ and normalizes the $w_{\bar z}$-coefficient to one,
\[
w_{\bar z} - q_1\,w_z - q_2\,\overline{w_z} = A\,w + B\,\bar w + C,
\qquad |q_1| + |q_2| \le q_0 < 1,
\]
carrying the conjugate-linearity in the \emph{derivative} term $q_2\,\overline{w_z}$.
The present conversion instead forms $E + \mu\,\bar E$ \eqref{eq:conversion-data},
retaining all four slots as $\Phi\,Lw + \Psi\,\overline{Mw}$; in particular it keeps
$\overline{w_{\bar z}}$ --- as the term $-\Psi\mu\,\overline{w_{\bar z}}$ inside
$\overline{Mw}$ --- precisely the slot Bojarski removes, while both forms retain
$\overline{w_z}$ (his $q_2$, our leading coefficient of $\overline{Mw}$). Each
reduction is pure pointwise algebra valid at measurable coefficients: Bojarski's
elimination is licensed by the ellipticity bound that holds
$\big||\mu|^2 - |1+\lambda|^2\big|$ away from zero, ours by
$|\Phi|^2 - |\Psi|^2 = 2a_{11}\sqrt\Delta$ \eqref{eq:frame-det}, and neither
differentiates a coefficient.

The two forms part company over the second principal coefficient. Bojarski's
$q_2$ is not removable at the level of the equation: his passage to a single
Beltrami coefficient, \cite[(4.8)]{bojarski},
$q_0 = q_1 + q_2\,\overline{w_z}/w_z$, depends on the solution. The
$\overline{w_z}$ coupling does not admit the recombination
$w \mapsto \varphi w + \psi\bar w$ --- feeding $\bar w'$ through it produces
derivatives of $(\varphi,\psi)$ with no principal slot to receive them. The frame
is that slot: the recombination closes (Proposition~\ref{prop:substitution}), and
the straightening (Proposition~\ref{prop:straighten}) removes $\Psi$ --- Bojarski's
$q_2$ --- at the level of the equation, its sole cost the term
$L\nu = W_L(\Phi,\Psi)/\Phi^2$ it deposits in $\Bcal_{\mathrm{str}}$
\eqref{eq:straightened}: that is, the numerator field $N$, that is, the mass. What
Bojarski can only linearize per solution, the frame removes once and for all, at
the price of the invariant; this is the algebraic content of the claim that the
conjugate Vekua coupling admits the recombination and Bojarski's does not.
Bojarski himself forms no invariant but exponentiates: in his representation
$w = f(\chi)\,e^{\varphi}$ \cite[Thm.~4.1]{bojarski} the conjugate coefficient $B$
enters through $h = A + B\,\bar w/w$ and is absorbed into $\varphi$, while the
residue of that same $B$ that no gauge exponentiates away is exactly
$|b_{(F,G)}|^2$, the mass density (Proposition~\ref{prop:bers}). His representation
makes the coupling disappear; the present invariant measures it --- as with
\cite[\S7]{vekua} (Remark~\ref{rem:vekua}), one disposal and one measurement of a
single object.
\end{remark}

\begin{remark}[Straightening before uniformizing]\label{rem:reorder}
The present theory reaches Vekua's target as well --- the standard Vekua equation
$w_{\bar\zeta} + \Acal\,w + \Bcal\,\bar w = \Fcal$ --- and at the regularity of
Corollary~\ref{cor:measurable-conversion} rather than that of \cite[\S7]{vekua}.
Both reductions perform the same two operations, one differentiation of the gauge
and one uniformization of $\mu$ (a Beltrami solve), and differ only in their order.
\cite{vekua} uniformizes first and differentiates the gauge afterward, in the new
coordinates: his substitution (7.18) and its derivative (7.20) are classical only
if the uniformizing chart is $C^1$, i.e.\ $\zeta \in D_{m+2,p}$, which forces
$q \in D_{m+1,p}$ and hence $\mu$ once differentiable. The present route
differentiates the gauge first --- the straightening of
Proposition~\ref{prop:straighten}, carried out in the original chart, where it
asks only for the frame in $\Wcal$ and uses $\mu$ as a bounded multiplier --- and
uniformizes afterward: the measurable Beltrami solve of
Corollary~\ref{cor:uniformization} is a quasiconformal change of variables under
which $\mu \mapsto 0$ and, by the diffeomorphism law
(Proposition~\ref{prop:diffeo}, Theorem~\ref{thm:measurable-mass}(iv)), the
lower-order data merely compose, no derivative being taken. The composite carries
any admissible framed equation, through its straightened Beltrami--Vekua slice, to
a standard Vekua equation, with the single gauge derivative of the straightening
as the only differentiation anywhere and $\mu$ never differentiated.

The regularity cost of the composite thus depends on the order: differentiating
the gauge before uniformizing decouples it from the smoothness of the uniformizing
chart and leaves $\mu$ merely measurable, whereas differentiating it afterward
couples the two and inflates the requirement to the full principal part of
\cite[\S7]{vekua}. What the order cannot change is the smoothness of the chart
itself --- with $\mu$ only measurable the uniformizing map is only quasiconformal,
so the Vekua equation reached has $L^2_{\mathrm{loc}}$ coefficients and holds
almost everywhere, the generalized equation of Bojarski's class \cite{bojarski}
rather than the $D_{m,p}$ presentation of \cite[\S7]{vekua}. This is no loss of
standardness: the $m=0$ reduction of \cite{vekua} already yields only $L^p$
coefficients, and the sole thing its extra regularity on $\mu$ secures is the
$C^1$ chart.
\end{remark}

\section{Coda: the pure frame as a Bers generating pair}\label{sec:coda}

The frame entered this paper as the one piece of data that carries a derivative,
inserted into the equation so that the recombination $w \mapsto \varphi w + \psi \bar w$
could close the class and so that the $L$-Wronskian could absorb the substitution's
defect into the invariant. Strip the equation of everything else --- set
$\mu = \fa = \fb = \ff = 0$ in \eqref{eq:framed} --- and what remains is the frame
alone:
\begin{equation}\label{eq:pure-frame}
\Phi\, w_{\bar z} + \Psi\, \overline{w_z} = 0, \qquad
\text{equivalently}\qquad w_{\bar z} = -\nu\,\overline{w_z}, \quad \nu := \Psi/\Phi,\ |\nu| < 1.
\end{equation}
One then sees what the frame has been all along. The pair $(\Phi, \Psi)$ is a
generating pair in the sense of Bers \cite{bers}, the equation
\eqref{eq:pure-frame} is the equation that pair generates, and the pseudo-analytic
mass is the $L^2$ size of Bers' characteristic coefficient --- the term to which
the construction owes its name.

\begin{proposition}[The pure frame is a Bers generating pair]\label{prop:bers}
Let \eqref{eq:framed} have $\mu = \fa = \fb = \ff = 0$ and $C^1$ (or $\Wcal$) frame,
$|\Phi| > |\Psi|$, so that it reads \eqref{eq:pure-frame}. Set
\[
F := \Phi + \Psi, \qquad G := i\,(\Phi - \Psi),
\qquad\text{equivalently}\qquad
\Phi = \tfrac12(F - iG), \quad \Psi = \tfrac12(F + iG).
\]
Then:
\begin{itemize}
\item[(i)] $(F, G)$ is a generating pair, with Bers' nondegeneracy equal to the
frame determinant:
\[
\operatorname{Im}(\bar F G) = |\Phi|^2 - |\Psi|^2 > 0 .
\]
\item[(ii)] Decomposing $w = \phi F + \psi G$ with $\phi, \psi$ real and writing the
second-kind coordinate $\omega := \phi + i\psi$, the first-kind function is the frame
recombination
\[
w \;=\; \phi F + \psi G \;=\; \Phi\,\omega + \Psi\,\bar\omega,
\]
and the pure-frame equation \eqref{eq:pure-frame}, read in $\omega$, is exactly Bers'
pseudoanalyticity condition:
\[
\phi_{\bar z} F + \psi_{\bar z} G \;=\; \Phi\,\omega_{\bar z} + \Psi\,\overline{\omega_z} \;=\; 0 .
\]
Its solutions are the $(F, G)$-pseudoanalytic functions of the second kind; the
pseudoanalytic constants $\omega \equiv 1, i$ are the generators $F, G$.
\item[(iii)] The first-kind function satisfies the Vekua equation
$w_{\bar z} = a_{(F,G)}\,w + b_{(F,G)}\,\bar w$ with characteristic coefficients
\[
a_{(F,G)} = \frac{\bar\Phi\,\Phi_{\bar z} - \bar\Psi\,\Psi_{\bar z}}{|\Phi|^2 - |\Psi|^2},
\qquad
b_{(F,G)} = -\,\frac{N}{|\Phi|^2 - |\Psi|^2} = \frac{\Phi^2\,\nu_{\bar z}}{|\Phi|^2 - |\Psi|^2},
\]
where $N = -W_L(\Phi, \Psi) = -\Phi^2\,\nu_{\bar z}$ is the numerator field of
Definition~\ref{def:mass}. Consequently the mass density \eqref{eq:mass-def} is the
squared modulus of Bers' coefficient,
\[
\Theta \;=\; \frac{|N|^2}{(|\Phi|^2 - |\Psi|^2)^2}\,dx\,dy
\;=\; \bigl| b_{(F,G)} \bigr|^2\, dx\, dy
\;=\; \frac{|\nu_{\bar z}|^2}{(1 - |\nu|^2)^2}\, dx\, dy,
\qquad \Mcal = \int_\Omega \bigl|b_{(F,G)}\bigr|^2 .
\]
\end{itemize}
\end{proposition}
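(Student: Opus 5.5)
The plan is a direct computation in three stages, mirroring (i)--(iii), with everything reduced to pointwise algebra plus the Leibniz rule. At $\mu = 0$ we have $L = \bar\partial$, so all derivatives are $\bar\partial$-derivatives of the frame. For the $\Wcal$ case, Lemma~\ref{lem:toolbox}(a) makes every product-rule step below valid almost everywhere. Throughout write $D := |\Phi|^2 - |\Psi|^2 > 0$.

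For (i) I expand $\bar F G = i(\bar\Phi + \bar\Psi)(\Phi - \Psi) = i\bigl(D + \Phi\bar\Psi - \bar\Phi\Psi\bigr)$. Since $\Phi\bar\Psi - \bar\Phi\Psi = 2i\operatorname{Im}(\Phi\bar\Psi)$ is purely imaginary, the product $i(\Phi\bar\Psi - \bar\Phi\Psi)$ is real. Hence $\operatorname{Im}(\bar F G) = D > 0$, which is Bers' nondegeneracy, so $(F,G)$ is a generating pair.

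For (ii) I substitute the definitions of $F, G$ with $\phi, \psi$ real:
\[
\phi F + \psi G = \Phi(\phi + i\psi) + \Psi(\phi - i\psi) = \Phi\omega + \Psi\bar\omega .
\]
This is the frame recombination of Proposition~\ref{prop:substitution} with $(\varphi,\psi) = (\Phi,\Psi)$. The same expansion applied to $\phi_{\bar z}, \psi_{\bar z}$, using $\overline{\omega_z} = \phi_{\bar z} - i\psi_{\bar z}$ because $\phi,\psi$ are real, gives
\[
\phi_{\bar z}F + \psi_{\bar z}G = \Phi\,\omega_{\bar z} + \Psi\,\overline{\omega_z} .
\]
Bers' condition for $w$ to be $(F,G)$-pseudoanalytic is exactly the vanishing of the left side. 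So it is equivalent to $\omega$ solving \eqref{eq:pure-frame}, and the constants $\omega = 1$ and $\omega = i$ correspond to $w = F$ and $w = G$.

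For (iii) I use that on solutions the Leibniz rule gives $w_{\bar z} = \phi F_{\bar z} + \psi G_{\bar z}$. By the computation of (ii), with the derivatives now falling on the frame, this equals $\Phi_{\bar z}\,\omega + \Psi_{\bar z}\,\bar\omega$. I then invert the real-linear map $\omega \mapsto \Phi\omega + \Psi\bar\omega$, which is the $T$-type inversion already used in Remark~\ref{rem:invertible}:
\[
\omega = \frac{\bar\Phi\, w - \Psi\, \bar w}{D}.
\]
Substituting and collecting the coefficients of $w$ and $\bar w$ gives
\[
a_{(F,G)} = \frac{\bar\Phi\,\Phi_{\bar z} - \bar\Psi\,\Psi_{\bar z}}{D},
\qquad
b_{(F,G)} = \frac{\Phi\,\Psi_{\bar z} - \Psi\,\Phi_{\bar z}}{D} = \frac{W_{\bar\partial}(\Phi,\Psi)}{D}.
\]
At $\mu = \fa = \fb = 0$ Definition~\ref{def:mass} gives $N = -W_L(\Phi,\Psi) = -W_{\bar\partial}(\Phi,\Psi)$, so $b_{(F,G)} = -N/D$. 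The identity \eqref{eq:quotient} gives $W_L = \Phi^2 \nu_{\bar z}$. Finally, \eqref{eq:mass-def} with $1 - |\mu|^2 = 1$, together with $|\Phi|^2(1-|\nu|^2) = D$ from \eqref{eq:Bstr-N}, yields all three expressions for $\Theta$, and integrating gives $\Mcal$.

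The only delicate point is bookkeeping rather than mathematics. One must keep straight that $\omega$ (second kind) solves the pure-frame equation while $w$ (first kind) solves the Vekua equation, and one must not let derivatives of the real coefficients $\phi, \psi$ leak into the characteristic coefficients. Using Bers' condition to kill exactly those terms before inverting resolves this.
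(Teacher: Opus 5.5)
Your proof is correct. Parts (i) and (ii) match the paper's computation, but you reach (iii) by a different route.

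The paper never differentiates a solution. It takes from Bers' theory that first-kind functions satisfy $w_{\bar z} = a w + b \bar w$. It then writes the defining conditions $F_{\bar z} = aF + b\bar F$ and $G_{\bar z} = aG + b\bar G$ for the generators (the pseudoanalytic constants). Finally it solves this $2\times 2$ system, whose determinant is $F\bar G - \bar F G = -2i\,(|\Phi|^2 - |\Psi|^2)$, and expands the numerators back into $(\Phi,\Psi)$.

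You instead derive the Vekua equation for an arbitrary first-kind function. Leibniz plus Bers' condition reduce $w_{\bar z}$ to $\Phi_{\bar z}\,\omega + \Psi_{\bar z}\,\bar\omega$. Inverting $\omega \mapsto \Phi\omega + \Psi\bar\omega$ via $\omega = (\bar\Phi w - \Psi\bar w)/D$ then gives $a$ and $b$ directly in frame form. The $b$-numerator is visibly $W_{\bar\partial}(\Phi,\Psi)$, and no back-translation from $(F,G)$ is needed.

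What each route buys:
\begin{itemize}
\item Your route proves, rather than cites, that first-kind functions satisfy the Vekua equation, and it stays inside the frame calculus.
\item The paper's route is shorter and differentiates only $F$ and $G$. So in the $\Wcal$ case it needs no regularity of the solutions $\omega$.
\item Your Leibniz step on $\phi F + \psi G$ tacitly needs $\phi,\psi$ in a class where the product rule with $\Wcal$ functions holds. Lemma~\ref{lem:toolbox}(a), as stated, only covers products of two $\Wcal$ functions, so you should state this regularity assumption on solutions explicitly.
\end{itemize}

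You also leave one line implicit. To call your coefficients Bers' $a_{(F,G)}$ and $b_{(F,G)}$, note that $\omega \equiv 1$ and $\omega \equiv i$ are solutions, so $F$ and $G$ satisfy your equation. The pair $(a,b)$ with this property is unique because $F\bar G - \bar F G \neq 0$, so your coefficients coincide with Bers' characteristic ones.
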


\begin{proof}
(i) $\bar F G = i\,(\bar\Phi + \bar\Psi)(\Phi - \Psi)
= i\bigl[(|\Phi|^2 - |\Psi|^2) + (\bar\Psi\Phi - \bar\Phi\Psi)\bigr]$, and
$\bar\Psi\Phi - \bar\Phi\Psi$ is purely imaginary, so
$\operatorname{Im}(\bar F G) = |\Phi|^2 - |\Psi|^2$, positive by the frame condition.

(ii) With $\phi = \tfrac12(\omega + \bar\omega)$ and $\psi = \tfrac1{2i}(\omega - \bar\omega)$,
\[
\phi F + \psi G = \omega\,\tfrac12(F - iG) + \bar\omega\,\tfrac12(F + iG) = \Phi\,\omega + \Psi\,\bar\omega .
\]
Since $\phi, \psi$ are real, $\overline{\phi_z} = \phi_{\bar z}$, so
$\phi_{\bar z} = \tfrac12(\omega_{\bar z} + \overline{\omega_z})$ and
$\psi_{\bar z} = -\tfrac{i}{2}(\omega_{\bar z} - \overline{\omega_z})$; collecting,
\[
\phi_{\bar z} F + \psi_{\bar z} G
= \tfrac12(F - iG)\,\omega_{\bar z} + \tfrac12(F + iG)\,\overline{\omega_z}
= \Phi\,\omega_{\bar z} + \Psi\,\overline{\omega_z},
\]
which is \eqref{eq:pure-frame}. The constants $\omega \equiv 1, i$ give
$\Phi + \Psi = F$ and $i(\Phi - \Psi) = G$.

(iii) The generators are pseudoanalytic constants,
$F_{\bar z} = a F + b \bar F$ and $G_{\bar z} = a G + b \bar G$; solving this $2\times2$
system, whose determinant is $F\bar G - \bar F G = -2i\,(|\Phi|^2 - |\Psi|^2)$ by (i),
gives the displayed $a_{(F,G)}, b_{(F,G)}$ after expansion, the $b$-numerator reducing
to $-2i\,(\Phi\Psi_{\bar z} - \Psi\Phi_{\bar z}) = -2i\,\Phi^2\nu_{\bar z}$. By
\eqref{eq:quotient} at $\mu = 0$, $\Phi^2\nu_{\bar z} = W_L(\Phi, \Psi)$, and since
$\fa = \fb = 0$ the numerator field is $N = -W_L(\Phi, \Psi) = -\Phi^2\nu_{\bar z}$;
hence $b_{(F,G)} = -N/(|\Phi|^2 - |\Psi|^2)$ and
$|b_{(F,G)}|^2 = |N|^2/(|\Phi|^2 - |\Psi|^2)^2$, which is the density of
\eqref{eq:mass-def} at $\mu = 0$.
\end{proof}

The dictionary closes the circle the paper opened. The recombination
$w \mapsto \varphi w + \psi\bar w$, which the frame was built to absorb and which the
conjugate Vekua coupling alone admits, is at the pure-frame extreme exactly Bers'
passage between functions of the first and second kind: the frame is the covariant
name for the generating pair, its determinant is Bers' nondegeneracy, its trivial
slice $(\Phi, \Psi) = (1, 0)$ is the pair $(1, i)$ and the ordinary
Cauchy--Riemann equation, and the $L$-Wronskian that Section~\ref{sec:mass} placed
inside $N$ so that the substitution's defect would cancel is, here, Bers'
characteristic coefficient $b_{(F,G)}$ up to the frame determinant. The mass is the
$L^2$ norm of that coefficient. Its companion $a_{(F,G)}$ is pure gauge, removed by
the $\C$-linear action of \cite{mass}; only $|b_{(F,G)}|^2$ is invariant, and it
measures the one part of the pair's failure to be holomorphic that no gauge can
erase. The mass vanishes precisely when $\nu = \Psi/\Phi$ is holomorphic --- the flat
case, in which the generating pair reduces, after a holomorphic recombination, to the
analytic one. This is why the invariant deserves the adjective it has carried since
\cite{mass}: it is the pseudo-analytic mass in Bers' own sense, the obstruction to a
generating pair being analytic.

Finally, the correspondence is indifferent to regularity in exactly the way
Section~\ref{sec:measurable} is. A frame in $\Wcal$ is a generating pair with
$W^{1,2}_{\mathrm{loc}} \cap L^\infty_{\mathrm{loc}}$ generators, its
nondegeneracy $|\Phi|^2 - |\Psi|^2$ bounded below on compacts, and its coefficient
$b_{(F,G)} \in L^2_{\mathrm{loc}}$; the measurable framed theory
(Theorem~\ref{thm:measurable-mass}) is Bers' theory of pseudoanalytic functions
carried to Sobolev generating pairs, and the uniformization of
Corollary~\ref{cor:uniformization} is the statement that every such pair is, up to a
quasiconformal change of variables of equal mass, a pair over the standard conformal
structure --- a generating pair in the plane.

\subsection*{Use of Generative AI Tools}
\medskip

The author discloses the use of Anthropic's Claude (Claude Fable 5, accessed
through the Claude.ai web interface in June 2026) in the preparation of this
manuscript. The tool was used as follows:

\begin{enumerate}
\item[(i)] \emph{Exploratory dialogue.} The algorithmic organization of the
construction --- in particular the closed conversion $E + \mu\bar E$, the
exact substitution law \eqref{eq:N-law} of the numerator field, the
$C^1$ diffeomorphism law \eqref{eq:diffeo-law}, and the measurable-regularity
formulation of Section~\ref{sec:measurable} --- emerged in iterative research sessions, building on the companion papers \cite{mass, abs}.

\item[(ii)] \emph{Symbolic verification and stress-testing.} The conversion identities, the
law \eqref{eq:N-law}, and the diffeomorphism identities of
Section~\ref{sec:diffeo} were stress-tested by exact rational computer algebra, selected cases verifications, with scripts produced with the tool's assistance and re-run, and further verified and analyzed by the author.

\item[(iii)] \emph{Drafting and revision of prose.} The manuscript was
drafted in iterative dialogue; all claims and their precise wording were
reviewed by the author.
\end{enumerate}

The author takes full responsibility for
the correctness, accuracy, originality, and integrity of all content.

\subsection*{Disclosure of interest}

The author reports there are no competing interests to declare.

\end{document}